\documentclass{amsart}
\usepackage{amssymb}
\usepackage[all]{xy}
\newtheorem{theorem}{Theorem}[section]
\newtheorem{lemma}[theorem]{Lemma}

\newtheorem{proposition}[theorem]{Proposition}
\newtheorem{definition}[theorem]{Definition}
\theoremstyle{definition}  

\newtheorem{example}[theorem]{Example}
\newcommand{\C}{\mathcal C}
\newcommand{\D}{\mathcal D}

\newcommand{\F}{\mathcal F}

\newcommand{\K}{\mathcal K}
\newcommand{\T}{\mathcal T}

\newcommand{\cO}{\mathcal O}
\newcommand{\BC}{\mathbb C}

\newcommand{\BZ}{\mathbb Z}
\newcommand{\BF}{\mathbb F}

\newcommand{\BQ}{\mathbb Q}

\newcommand{\FP}{\mbox{FPdim}}
\newcommand{\Rep}{\mbox{Rep}}
\newcommand{\Hom}{\mbox{Hom}}
\newcommand{\Aut}{\mbox{Aut}}
\newcommand{\id}{\mbox{id}}
\newcommand{\coeval}{\mbox{coeval}}
\newcommand{\eval}{\mbox{eval}}
\newcommand{\Id}{\mbox{Id}}
\newcommand{\Tr}{\mbox{Tr}}
\newcommand{\Ve}{\mbox{Vec}}

\def\tareesidedbox#1{\setbox0=\hbox{$#1$}\dimen0=\wd0 \advance\dimen0 by3pt\rlap{\hbox{\vrule height9pt width.4pt
 depth2pt \kern-.4pt\vrule height9.4pt width\dimen0 depth-9pt\kern-.4pt \vrule height9pt width.4pt depth2pt}}
 \relax \hbox to\dimen0{\hss$#1$\hss}}

\def\tareesidedboy#1{\setbox0=\hbox{$#1$}\dimen0=\wd0 \advance\dimen0 by3pt\rlap{\hbox{\vrule height9.8pt width.4pt
 depth2pt \kern-.4pt\vrule height10pt width\dimen0 depth-9.6pt\kern-.4pt \vrule height9.8pt width.4pt depth2pt}}
 \relax \hbox to\dimen0{\hss$#1$\hss}}

\begin{document}

\title{ON BRAIDED NEAR-GROUP CATEGORIES}
\author{JOSIAH THORNTON}

\maketitle

\begin{abstract}  We prove that any fusion category over $\BC$ with exactly one non-invertible simple object is spherical.  Furthermore, we classify all such categories that come equipped with a braiding.
\end{abstract}

\section{Introduction and statements of results}

This paper is devoted to the study of a type of fusion categories introduced by Jacob Siehler.  For this paper, we will assume that the ground field is $\BC.$  Similar results can be found for any algebraically closed field of characteristic $0.$

\begin{definition}\cite{S1} A near-group category is a semisimple, rigid tensor category with finitely many simple objects (up to isomorphism) such that all but one of the simple objects is invertible.  In the language of fusion categories, a near-group category is a fusion category with one non-invertible simple object.  If such a category comes equipped with a braiding, then we call it a braided near-group category.
\end{definition}

We show (see section 2) the well-known result that the Grothendieck ring of a near-group category is determined by a finite group $G$ and a non-negative integer $k.$  For each near-group category $\C,$ we call the data $(G,k)$ the near-group fusion rule of $\C.$\\

\begin{example} (i) Near-group categories with fusion rule $(G,0)$ for some finite group $G$ are known as Tambara-Yamagami categories.  These categories are classified up to tensor equivalence in \cite{TY}.  When they come equipped with a braiding, they are classified up to braided tensor equivalence in \cite{S1}.\\
(ii)  The well-known Yang Lee (see \cite{O1}) categories are precisely the near-group categories with fusion rule $(1,1).$  Up to tensor equivalence, there are two such categories, each of these admitting two braidings.\\
(iii) Let $\C$ be the fusion category associated to the affine $\mathfrak{sl}_2$ on level $10$ and let $A\in\C$ be the commutative $\C$-algebra of type $E_6$.  The category $\Rep(A)$ of right $A$-modules contains a fusion subcategory (see \cite[Section 4.5]{O2}) which is a near-group category with fusion rule $(\BZ/2\BZ,2).$\\
(iv) The Izumi-Xu category $\mathcal{IX}$ (see \cite[Appendix A.4]{CMS}) is a near-group category with fusion rule $(\BZ/3\BZ,3).$
\end{example}

J. Siehler has many results on near-group categories.  With \cite[Theorem 1.1]{S2} he proves that if $k\neq0,$ then $|G|\leq k+1.$  Siehler also classisfied braided near-group categories with near-group fusion rule $(G,0)$ in \cite[Theorem 1.2]{S1}.

Let $\C$ be a tensor category.  Recall (see \cite{DGNO}) that a spherical structure on $\C$ is an isomorphism of tensor functors $\varphi:\Id\to\ast\ast$ so that for every simple object $V\in\C,$ we have
$$\dim(V)=\Tr_V(\varphi)=\Tr_{V^\ast}(\varphi)=\dim(V^\ast).$$
Let $\varphi$ be an field automorphism of $\BC.$ Recall  that a spherical structure is called $\varphi$-pseudounitary if $\varphi(\dim(V))>0,$ for all simple objects $V.$

Our first main theorem, which is proved in section 2, is a result for (not necessarily braided) near-group categories.  This result is positive evidence for the question \cite{ENO} whether all fusion categories admit a spherical structure.

\begin{theorem} \label{sph} Any near-group category is spherical, moreover it is $\varphi$-pseudounitary for a suitable choice of $\varphi.$
\end{theorem}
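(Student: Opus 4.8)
The strategy is to reduce sphericality to a computation with the dimensions coming from the fusion rule $(G,k)$. Recall that in any fusion category over $\BC$ there is a canonical (pivotal-type) structure coming from the fact that $V^{****}\cong V$ naturally; concretely, one can always choose an isomorphism $\psi:\Id\to\ast\ast\ast\ast$ and then the obstruction to finding a square root (a genuine pivotal/spherical structure) is controlled by a sign or, more precisely, by the behavior of $\psi$ on the non-invertible object. So first I would fix notation: let $X$ be the unique non-invertible simple object, and recall from section~2 that $X^{\ast}\cong X$ (this follows from the fusion rule, since the invertibles and $X$ are permuted by duality and $X$ is the only fixed candidate), and that $X\ot X\cong \bigoplus_{g\in G} g \,\oplus\, kX$. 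Taking Frobenius–Perron dimensions gives $d^2 = |G| + k d$ where $d=\FP(X)$, so $d = \tfrac{k+\sqrt{k^2+4|G|}}{2}$.

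**Reducing to the non-invertible object.**

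Next I would invoke the general fact (from \cite{ENO} or \cite{DGNO}) that a pivotal structure exists iff the canonical natural isomorphism $g:\Id\to\ast\ast$ squares correctly, and that pivotal structures form a torsor over $\Hom(U(\C),\BC^\times)$ where $U(\C)$ is the universal grading group / group of invertible objects acting by tensoring. The point is that on \emph{invertible} objects every pivotal structure automatically gives $\dim(\ell)=\pm 1$ with the squared-dimension being $1$, and more importantly any two pivotal structures differ by a character, so one can always rescale so that the invertibles all have dimension $+1$ (using that $G$, being the group of invertibles, admits enough characters). Hence the only genuine constraint for \emph{sphericality} is the single equation $\dim(X) = \dim(X^\ast)$, and since $X\cong X^\ast$ this is the statement that $\dim(X)$ is well-defined independent of which duality we use, i.e. that the pivotal structure restricted to $X$ is a "real" scalar. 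I would then show $\dim(X) = \pm d$: because $\dim$ is a ring homomorphism on the Grothendieck ring composed with the pivotal structure, $\dim(X)$ satisfies the same quadratic $\dim(X)^2 = |G| + k\dim(X)$ (once invertibles are normalized to $1$), whose only solutions are $d$ and $d-k = \tfrac{k-\sqrt{k^2+4|G|}}{2}$; in either case $\dim(X)\in\BR$, so $\dim(X)=\dim(X^\ast)$ holds and the pivotal structure is spherical.

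**Pseudounitarity and the main obstacle.**

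For the $\varphi$-pseudounitarity claim: if the chosen spherical structure already has $\dim(X)=d>0$ we are done with $\varphi=\id$; if instead $\dim(X) = d-k<0$, I would apply a Galois automorphism $\varphi$ of $\BC$ fixing $\BQ$ and sending $\sqrt{k^2+4|G|}$ to $-\sqrt{k^2+4|G|}$ (such $\varphi$ exists since $k^2+4|G|$ is either a perfect square — in which case $d-k$ is already a positive integer, contradiction, so it isn't — or generates a real quadratic field), which swaps the two roots and hence makes $\varphi(\dim(X))>0$, while $\varphi$ fixes the dimensions $1$ of the invertibles. The main obstacle I anticipate is the claim that one can simultaneously normalize \emph{all} invertible objects to have dimension $+1$: this requires knowing that the restriction of the pivotal structure to the subcategory generated by invertibles is "trivializable," which in turn uses that $\Hom(G,\BC^\times)$ surjects onto the relevant obstruction group — equivalently that the Tambara–Yamagami-type pointed part is itself spherical, a fact one must either cite or verify by hand. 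A secondary subtlety is ensuring the rescaling used to fix the invertibles does not destroy the tensor-functor property of $\varphi$ on $X$; but since tensoring by an invertible permutes the simples and $X$ is fixed, the consistency equation on $X$ forces at most a sign, which is exactly the $\pm d$ dichotomy already handled. Modulo these points the proof is a short dimension count.
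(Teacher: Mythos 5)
There is a genuine gap, and it sits at the very first step. Your argument presupposes the existence of a pivotal structure: you speak of ``the canonical natural isomorphism $g:\Id\to\ast\ast$'' and then treat $\dim$ as a character of the Grothendieck ring, deducing $\dim(X)^2=|G|+k\dim(X)$. But no such canonical isomorphism of \emph{tensor} functors $\Id\to\ast\ast$ is known to exist for a general fusion category; what is canonical is only $\gamma:\Id\to\ast\ast\ast\ast$ (\cite[Theorem 2.6]{ENO}), and the existence of a tensor-compatible square root of $\gamma$ is exactly the open conjecture that Theorem \ref{sph} partially resolves. Once you assume a pivotal structure exists, the remaining verification that it is spherical (and the torsor over characters of the group of invertibles) is the easy part; your proposal therefore begs the question. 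Your ``main obstacle'' paragraph about normalizing the invertibles to dimension $+1$ is a red herring --- that is not where the difficulty lies.

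The paper's proof locates and resolves the real obstruction by working in the sphericalization $\tilde{\C}$, where every simple $V$ lifts to two objects $V_\pm$ carrying the two square roots of $\gamma_V$. There one computes $X_+\otimes X_+\simeq\bigoplus_{g\in G}g_+\oplus sX_+\oplus tX_-$ with $s+t=k$, so the dimension $d=\dim(X_+)$ satisfies $d^2=|G|+rd$ with $r=s-t$, \emph{not} a priori the equation with $k$ that you wrote down. The entire content of the theorem is the claim $r=k$ (equivalently $t=0$): only then is the full subcategory generated by $\{g_+\}_{g\in G}\cup\{X_+\}$ closed under tensor product, hence equivalent to $\C$ via the forgetful functor, endowing $\C$ with a spherical structure. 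The paper proves $r=k$ by showing that $\dim(\tilde{\C})/\FP(\tilde{\C})$ is an algebraic integer (\cite[Proposition 8.22]{ENO}), applying a Galois-conjugation lemma to conclude that $(r^2+4n)/(k^2+4n)$ is an algebraic integer, and deducing $r^2=k^2$. None of this machinery, nor any substitute for it, appears in your proposal; without it the step ``$\dim(X)$ satisfies the same quadratic'' is unjustified. Your treatment of $\varphi$-pseudounitarity via a Galois automorphism swapping the roots of $x^2-kx-|G|$ is consistent with the paper's conclusion, but it only becomes meaningful after the spherical structure has actually been constructed.
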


From the results of P. Deligne and  G. Seitz, we are able to deduce the following classification of symmetric near-group categories.
\begin{proposition} \label{symmetric}
Let $\C$ be a symmetric near-group category with near-group fusion rule $(G,k)$ and $k\neq0.$  Then $\C$ is braided tensor equivalent to $\Rep(\BF_{p^l}\rtimes\BF_{p^l}^\ast),$ for some $p^l\neq2.$
\end{proposition}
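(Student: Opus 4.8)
The plan is to first realize $\C$ as the representation category of a finite (super)group via Deligne's theorem, then to determine its fusion rule by an arithmetic argument, and finally to appeal to G. Seitz's classification of finite groups possessing a unique irreducible representation of degree greater than one. By Deligne's structure theorem for symmetric fusion categories over $\BC$ (see \cite{DGNO}), there are a finite group $G'$ and a central element $z\in G'$ with $z^2=e$ such that $\C$ is braided tensor equivalent to $\Rep(G',z)$. In particular $\C$ is integral, so the dimension $d:=\FP(X)$ of the unique non-invertible simple object $X$ is an integer with $d\geq2$. Applying $\FP$ to $X\ot X\cong\bigoplus_{g\in G}g\oplus X^{\oplus k}$ yields $d^2=|G|+kd$, i.e. $|G|=d(d-k)$, so $d\geq k+1$; combining this with Siehler's bound $|G|\leq k+1$ (valid since $k\neq0$, \cite{S2}) forces $d-k=1$. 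Hence $d=k+1=|G|$ and $\FP(\C)=|G|+d^2=(k+1)(k+2)$.

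Next I would pass to group theory. The invertible simple objects of $\Rep(G')$ are exactly its one-dimensional representations, so the fusion-rule group $G$ is the character group of $(G')^{\mathrm{ab}}$; thus $|(G')^{\mathrm{ab}}|=|G|=k+1$, while $G'$ has exactly one irreducible representation of degree $>1$, and that degree is $d=k+1$ — equal to the number of linear characters of $G'$. I would then invoke Seitz's classification of finite groups with a single nonlinear irreducible character and impose the extra equality (degree of the nonlinear irrep) $=$ (number of linear characters), equivalently $|[G',G']|=d+1$. This singles out the groups $\BF_q\rtimes\BF_q^\ast$ with $q:=d+1=k+2$ a prime power: concretely, tensoring the nonlinear irrep $\rho$ by an arbitrary linear character fixes $\rho$ (there being no other nonlinear irrep), which forces $\rho$ to vanish off $[G',G']$; Clifford theory then forces $[G',G']$ to be abelian, in fact an elementary abelian $p$-group $N$ of order $q$ whose nonzero elements form a single orbit under $G'/N$, so $G'/N$ embeds in $\mathrm{GL}(N)$ as a Singer cycle and $N\cong\BF_q$, $G'/N\cong\BF_q^\ast$ acting by multiplication, the extension splitting by Schur--Zassenhaus. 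Note $q\geq3$, i.e. $q=p^l\neq2$, because $d\geq2$; this also matches $|\BF_q\rtimes\BF_q^\ast|=q(q-1)=(k+2)(k+1)=\FP(\C)$.

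It remains to exclude a nontrivial super structure. The underlying fusion category of $\Rep(G',z)$ is $\Rep(G')$, which by the previous paragraph forces $G'\cong\BF_q\rtimes\BF_q^\ast$; for $q\geq3$ this group has trivial centre, so the only admissible central involution is $z=e$. Therefore $\C$ is braided tensor equivalent to $\Rep(\BF_{p^l}\rtimes\BF_{p^l}^\ast)$ for the prime power $p^l=k+2\neq2$, which is the assertion. One can also rule out the super case directly from Theorem~\ref{sph}: a non-Tannakian $\Rep(G',z)$ has a simple object of negative categorical dimension, which no field automorphism $\varphi$ can send to a positive number.

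The hard part is the group-theoretic input: deriving $G'\cong\BF_q\rtimes\BF_q^\ast$ from the bare data ``$G'$ has exactly one nonlinear irreducible character, of degree equal to its number of linear characters''. The decisive steps are the observation that $\rho$ must vanish off $[G',G']$ and the recognition that a regular subgroup of $\mathrm{GL}(N)$ acting on $N\setminus\{0\}$ is a Singer cycle; everything else is bookkeeping. A secondary point needing care is to use Deligne's equivalence as a \emph{braided} (not merely tensor) equivalence and to confirm $z=e$, so that the conclusion is genuinely a braided tensor equivalence with the Tannakian category $\Rep(\BF_{p^l}\rtimes\BF_{p^l}^\ast)$.
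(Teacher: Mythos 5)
Your proposal is correct and follows essentially the same route as the paper: Deligne's theorem realizes $\C$ as $\Rep(G',z)$, Seitz's classification of groups with a unique nonlinear irreducible character identifies $G'$ as $\BF_{p^l}\rtimes\BF_{p^l}^\ast$ (the $2$-group case being excluded because it forces $k=0$), and triviality of $Z(G')$ forces $z=e$. The only cosmetic differences are that you rule out Seitz's first case via the identity $d=k+1=|G|$ obtained from Siehler's bound rather than via Proposition \ref{pseudo}, and you optionally re-derive the relevant case of Seitz's theorem instead of just citing it.
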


Our main result is from the study of non-symmetric braided near-group categories. We prove our main theorem in section 4.

\begin{theorem} \label{class} Let $\C$ be a non-symmetric, braided, near-group category with fusion rule $(G,k)$ where $k\neq0,$ then $G$ is either the trivial group, $\BZ/2\BZ$ or $\BZ/3\BZ.$  Furthermore if $G$ is trivial, then there are are four associated braided near group categories (up to braided tensor equivalence).  All of these categories have fusion rule $(1,1).$  If $G=\BZ/2\BZ,$ then there are another two associated near-group categories, both with near-group fusion rule $(\BZ/2\BZ,1).$  And finally, if $G=\BZ/3\BZ,$ then there is one associated category with near-group fusion rule $(\BZ/3\BZ,2).$
\end{theorem}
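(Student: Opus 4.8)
The plan is to exploit the interplay between the braiding and the fusion rule $(G,k)$ of a near-group category, combined with the sphericality guaranteed by Theorem \ref{sph}. Let $X$ be the unique non-invertible simple object. From the fusion rules one has $X\ot X\cong\bigoplus_{g\in G}g\oplus X^{\oplus k}$, and the associated dimension equation $d^2=|G|+kd$ forces $d=\dim(X)=\tfrac{k+\sqrt{k^2+4|G|}}{2}$. The first step is to extract strong constraints from the braiding on the group $G$ and the action of $G$ on $X$ by tensoring: since $g\ot X\cong X$ for every $g\in G$, conjugation/braiding identities show that the grading group acts on $X$ and that the braiding endomorphisms $c_{X,X}$ must be simultaneously diagonalizable with eigenvalues that are roots of unity. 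I would first show that $G$ must be abelian (invertible objects in a braided category form a symmetric-up-to-sign subcategory, and the near-group constraint $|G|\le k+1$ from \cite[Theorem 1.1]{S2} together with the braiding forces commutativity), and then analyze the pointed braided subcategory $\C_{pt}$ generated by $G$: it is a pre-metric group, and the requirement that $X$ be stable under the whole of $G$ severely restricts the quadratic form.

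The second step is the arithmetic heart: I would write down the balancing/ribbon equation and the hexagon axioms restricted to the objects $\{g, X\}$, obtaining a system of polynomial equations in the braiding scalars, the associator $6j$-symbols, and the twist $\theta_X$. Using the known solution of the pentagon equation for near-group fusion rules (Siehler's classification of the possible associators, \cite{S2}) as input, each admissible associator yields a finite list of candidate braidings from the hexagon equations. The non-symmetry hypothesis is then used to discard the cases where every candidate braiding is symmetric — in particular, by Proposition \ref{symmetric}, any symmetric example with $k\ne 0$ is $\Rep(\BF_{p^l}\rtimes\BF_{p^l}^\ast)$, so I must show that for $|G|\ge 4$ (equivalently $|G|$ not among $1,2,3$) the only braided structures are symmetric, contradicting the hypothesis. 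The bound $|G|\le k+1$ keeps the search finite once one shows $k$ is bounded; boundedness of $k$ should follow by combining the dimension formula with the fact that $\theta_X$ is a root of unity and the Gauss-sum/Vafa-type constraint on twists in a braided fusion category.

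For the three surviving groups I would then enumerate explicitly. For $G=1$ the fusion rule is $(1,k)$ with $d^2=1+kd$; the ribbon constraint forces $k=1$ (Yang–Lee), and the hexagon equations over the two Yang–Lee associators (from Example (ii)) give exactly four solutions up to braided equivalence, matching the claimed count. For $G=\BZ/2\BZ$, the fusion rule must be $(\BZ/2\BZ,1)$ (again by bounding $k$ via the twist equation and $|G|\le k+1$), and solving hexagon over the admissible associators yields exactly two non-symmetric braidings; here I would identify them with the relevant subcategory of a quantum-group category as in Example (iii) after a further rigidity check. For $G=\BZ/3\BZ$ the constraints pin down $(\BZ/3\BZ,2)$ and a unique braided structure, which one recognizes as (a subcategory related to) the Izumi–Xu category $\IX$ of Example (iv).

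The main obstacle I anticipate is the hexagon-equation bookkeeping in the mixed cases $G=\BZ/2\BZ$ and $G=\BZ/3\BZ$: unlike the Tambara–Yamagami case ($k=0$) treated in \cite{S1}, here $X$ appears in $X\ot X$ with multiplicity $k\ge 1$, so the associator involves genuinely non-scalar $k\times k$ matrix blocks and the hexagon identities become matrix equations rather than scalar ones. Controlling these — showing the relevant matrices can be simultaneously normalized and that only finitely many gauge-inequivalent solutions survive — is where the real work lies; I expect to need the sphericality from Theorem \ref{sph} (to fix pivotal normalizations and make traces well-defined) together with unitarity/pseudounitarity to cut the solution set down to the finite list asserted in the theorem.
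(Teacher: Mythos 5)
Your plan is a genuinely different route from the paper's --- a direct attack via pentagon/hexagon data rather than structure theory --- but as written it has gaps at exactly the points where the theorem's content lives. The most serious one is the input you take for granted: ``the known solution of the pentagon equation for near-group fusion rules (Siehler's classification of the possible associators, \cite{S2}).'' No such classification exists in \cite{S2} for general $(G,k)$ with $k\neq 0$; that paper gives the bound $|G|\le k+1$ and treats special cases, and the classification of associators for near-group fusion rules is in general open. Without that input your entire enumeration strategy has nothing to enumerate over. Second, the reduction to $|G|\in\{1,2,3\}$ --- the heart of the theorem --- is never argued: you write that you ``must show that for $|G|\ge 4$ the only braided structures are symmetric'' but propose no mechanism for doing so, and likewise the boundedness of $k$ is asserted to ``follow'' from a Vafa-type twist constraint without derivation. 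These are not bookkeeping issues; they are the theorem.

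The paper sidesteps all hexagon computations. Since $X\simeq X^\ast$ and $X\otimes X$ contains every simple object, $\C_{ad}=\C$, and the identity $(\C_{ad})'=(\C')^{co}$ forces the M\"uger center to contain all of $\Ve_G$; in the non-symmetric case $\C'=\Ve_G$, which is shown to be Tannakian, $\C'=\Rep(H)$ (the super-Tannakian case is excluded by a twist argument on $s\Ve$). De-equivariantizing gives a braided category $\D$ with $\D^H=\C$ having exactly two $H$-orbits of simple objects. Either $\D$ has rank $2$ (yielding Yang--Lee or Tambara--Yamagami) or $\D$ is pointed and non-degenerate, hence a metric group $(A,q)$; transitivity of the $H$-action makes $q$ constant on non-trivial elements, and the Gauss sum identity $\bigl|\sum_{a\in A}q(a)\bigr|^2=|A|$ then forces $|A|\le 4$, whence $G=H$ has order at most $3$ and the explicit list follows by counting quadratic forms and actions. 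If you want to salvage your approach, you would need to first establish something playing the role of this M\"uger-center/de-equivariantization reduction, since the hexagon equations alone will not hand you the bound on $|G|$.
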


We note that Proposition \ref{symmetric}, Theorem \ref{class} and J. Siehler's classification of braided near-group categories with fusion rule $(G,0)$ give a complete classification of braided near-group categories.

\subsection{Acknowledgments}  I would like to thank Victor Ostrik for teaching me everything I know about fusion categories.  His guidance and instruction made this paper possible.  I would also like to thank Eric Rowell for the suggestions and corrections he has made.

\section{Near-group categories are spherical}
The goal of this section is to prove Theorem \ref{sph}.

\subsection{Near-group fusion rule}

Let $\C$ be a near-group category with non-invertible object $X.$  The set of invertible objects of $\C$, denoted $\cO(\C),$  forms a group where multiplication is given by the tensor product structure on the category.  Therefore we can associate a finite group $G$ to any near-group category.  Let $g\in G$ represent an invertible object of $\C.$  Since $g$ is invertible and $X$ is not invertible, $g\otimes X$ is a non-invertible simple object of $\C,$ therefore $g\otimes X\simeq X$ for all $g\in G.$  Similarly, $X^\ast$ is a non-invertible simple object, and therefore $X^\ast\simeq X.$  Therefore
$$\Hom(g,X\otimes X)\cong \Hom(1,g^\ast\otimes X\otimes X)\cong\Hom(1,X\otimes X)\neq0.$$
Thus $g$ appears as a summand of $X\otimes X$ for each $g\in G.$  Since $\dim\Hom(1,X\otimes X)=1,$ $g$ appears as a summand of $X\otimes X$ exactly once.  Therefore we may decompose
$$X\otimes X\simeq\bigoplus_{g\in G} g\oplus kX$$
for some $k\in\BZ_{\geq0}.$ 

\subsection{Sphericalization of a near-group category}

For any fusion category, $\C,$ we are given $\gamma: Id\to \ast\ast\ast\ast$ an isomorphism of tensor functors by \cite[Theorem 2.6]{ENO}.  Then we may define the sphericalization of $\C.$

\begin{definition} \cite[Remark 3.1]{ENO}  The sphericalization, $\tilde{\C},$ of a fusion category $\C$ is the fusion category whose simple objects are pairs $(V,\alpha)$ where $V\in\cO(\C)$ and $\alpha:V\stackrel{\sim}{\to}V^{\ast\ast}$ satisfies $\alpha^{\ast\ast}\alpha=\gamma.$  This category has a canonical spherical structure $i:\Id\to\ast\ast.$
\end{definition}

Fix an isomorphism $f:V\to V^{\ast\ast}.$ Since $\Hom(V,V^{\ast\ast})$ is one dimensional, we may write $\alpha=a\cdot f$ for some $a\in\BC^\times.$  We also have $\alpha^{\ast\ast}=a\cdot f^{\ast\ast}.$  Similarly, we may write $\gamma=z\cdot f^{\ast\ast}f$ for some $z\in\BC^\times.$  Then the condition $\alpha^{\ast\ast}\alpha=\gamma$ is equivalent to $a^2=z.$  Therefore for each $V\in\cO(\C),$ we have two such $\alpha.$  Fixing one, we write $(V,\alpha)=V_+$ and $(V,-\alpha)=V_-.$

Now let $\C$ be a near-group category with non-invertible simple object $X$ and fusion rule $(G,k).$  For $X_+,X_-\in\tilde{\C},$ let $d=\dim(X_+)=\Tr_{X_+}(i)=\Tr_X(\alpha),$ so that $\dim(X_-)=\Tr_{X_-}(i)=\Tr_X(-\alpha)=-d.$ Similarly for $g\in G,$ define $g_+\in\tilde{\C}$ to be the simple object with $\dim(g_+)=1$ and whose image under the forgetful functor $\F:\tilde{\C}\to\C$ is $g.$

\subsection{Some technical lemmas}

\begin{lemma} For $e\in G$ the identity, $e_-\otimes X_+\simeq X_-.$  Furthermore for $g,h\in G,$ we have $g_+\otimes X_\pm\simeq X_\pm$ and $g_+\otimes h_+\simeq (g\otimes h)_+.$
\end{lemma}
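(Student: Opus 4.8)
The plan is to work in the sphericalization $\tilde{\C}$, using the forgetful functor $\F:\tilde{\C}\to\C$ and the fact that $\F$ is a tensor functor which does not change dimensions up to sign; the key bookkeeping device throughout is the dimension function $\dim$ on $\tilde{\C}$, which I will use to pin down which of the two lifts $V_+, V_-$ of a given $V\in\cO(\C)$ occurs in each tensor product. First I would treat $g_+\otimes h_+$: applying $\F$ gives $g\otimes h$ in $\C$, so in $\tilde{\C}$ we have $g_+\otimes h_+\simeq (g\otimes h)_{\pm}$; since $\dim(g_+\otimes h_+)=\dim(g_+)\dim(h_+)=1>0$ and $\dim((g\otimes h)_-)=-1$, the only possibility is $g_+\otimes h_+\simeq (g\otimes h)_+$. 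The same argument handles $g_+\otimes X_\pm$: it is a simple object of $\tilde{\C}$ lifting $g\otimes X\simeq X$, hence equals $X_+$ or $X_-$; comparing dimensions, $\dim(g_+\otimes X_+)=\dim(g_+)\dim(X_+)=d$ while $\dim(X_-)=-d$, so as long as $d\neq 0$ we get $g_+\otimes X_+\simeq X_+$, and likewise $g_+\otimes X_-\simeq X_-$.

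For the statement $e_-\otimes X_+\simeq X_-$, note first that $e_-$ makes sense: $e_-=(e,-\alpha_e)$ is the nontrivial lift of the unit object $e$ of $\C$ (it is \emph{not} the unit of $\tilde{\C}$, which is $e_+$). Again $e_-\otimes X_+$ lifts $e\otimes X\simeq X$, so it is $X_+$ or $X_-$; since $\dim(e_-)=-1$ we get $\dim(e_-\otimes X_+)=-d=\dim(X_-)$, forcing $e_-\otimes X_+\simeq X_-$, provided $d\neq 0$.

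Thus the entire lemma reduces to the single nontrivial point: $d=\dim(X_+)\neq 0$. The hard part is exactly this nonvanishing. I would argue as follows: $\tilde{\C}$ is a fusion category with a genuine spherical structure, so for any object the categorical dimension is a well-defined invariant, and $X_+\otimes X_-^\ast$ (or $X_+\otimes X_+^\ast$, depending on how duals interact with the grading) contains the unit $e_+$ with multiplicity one, which gives a nonzero morphism and hence, via the spherical trace, a relation forcing $d^2$ to be a positive combination of the $\dim(g_+)^2=1$ — concretely, decomposing $X_+\otimes X_+$ in $\tilde{\C}$ lifts the near-group fusion rule $X\otimes X\simeq \bigoplus_{g\in G} g\oplus kX$, and taking dimensions yields $d^2 = \sum_{g}(\pm 1) + k(\pm d)$, whose right-hand side cannot be arranged to vanish because the leading term $\sum_g \dim(g_+\otimes\text{(twist)})$ — once the signs are worked out using the first part of the lemma applied to the summands — is a sum of $|G|$ terms each equal to $+1$. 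So $d^2 = |G| + (\text{integer})\cdot d$ with $|G|\geq 1$, whence $d\neq 0$. (If one prefers to avoid the apparent circularity of using the first part of the lemma to fix signs, one can instead invoke that $\F$ is surjective on simples and that a fusion category has no simple object of dimension $0$, i.e. $\dim$ is a ring homomorphism to $\BC$ with $\dim(\be)=1$ and $\dim$ nonzero on simples, which is part of the general theory of \cite{ENO}.) Once $d\neq 0$ is in hand, all four isomorphisms follow from the dimension comparisons above, and I would present the proof in the order: (1) $d\neq 0$; (2) $g_+\otimes h_+\simeq(g\otimes h)_+$; (3) $g_+\otimes X_\pm\simeq X_\pm$; (4) $e_-\otimes X_+\simeq X_-$.
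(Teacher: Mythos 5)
Your proposal is correct and follows essentially the same route as the paper: apply the forgetful functor to identify the candidate simple object up to the $\pm$ ambiguity, then compare categorical dimensions to fix the sign. The only difference is that you spend considerable effort justifying $d\neq 0$, which the paper takes for granted; your parenthetical is the right fix (simple objects in a fusion category have nonzero categorical dimension, by \cite{ENO}), and the longer ``hard part'' discussion preceding it is unnecessary.
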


\begin{proof}  Applying the forgetful functor $\F(e_-\otimes X_+)\simeq\F(e_-)\otimes \F(X_+)\simeq X,$ and $\dim(e_-\otimes X_+)=-\dim(X_+),$ so $e_-\otimes X_+\simeq X_-.$\\
Similarly we have $\F(g_+\otimes X_+)\simeq\F(g_+)\otimes\F(X_+)\simeq g\otimes X\simeq X,$  and $\dim(g_+\otimes X_+)=\dim(g_+)\dim(X_+)=1\cdot d=d.$  Therefore $g_+\otimes X_+\simeq X_+.$  An analogous proof shows $g_+\otimes X_-\simeq X_-.$\\
Finally $\F(g_+\otimes h_+)\simeq g\otimes h,$ and $\dim(g_+\otimes h_+)=1,$ so $g_+\otimes h_+\simeq (g\otimes h)_+.$

\end{proof}

\begin{lemma}  Let $\C$ be a near-group category with fusion rule $(G,k)$ and non-invertible object $X,$ and let $\tilde{\C}$ be the sphericalization of $\C.$  We have $(X_\pm)^\ast\simeq X_\pm,$ and furthermore 
$$X_+\otimes X_+\simeq X_-\otimes X_-\simeq \bigoplus_{g\in G}g_+\oplus sX_+\oplus tX_-,$$
where $s+t=k.$
\end{lemma}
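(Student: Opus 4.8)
The plan is to argue entirely inside the sphericalization $\tilde\C$, using three inputs: the forgetful tensor functor $\F\colon\tilde\C\to\C$ (which sends $X_\pm$ to $X$ and $g_\pm$ to $g$) together with the fusion rule $X\otimes X\simeq\bigoplus_{g\in G}g\oplus kX$ of $\C$; the spherical structure of $\tilde\C$, which gives $\dim(V)=\dim(V^\ast)$ for every simple $V$; and the previous lemma. I will repeatedly identify a simple object of $\tilde\C$ by its $\F$-image together with its dimension, which is legitimate because the simple objects lying over an invertible $g$ are exactly $g_+$ and $g_-$, of dimensions $1$ and $-1$, and those lying over $X$ are exactly $X_+$ and $X_-$, of dimensions $d$ and $-d$.

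First I would show $(X_\pm)^\ast\simeq X_\pm$. Duals of simples are simple and $\F\big((X_+)^\ast\big)\simeq X^\ast\simeq X$, so $(X_+)^\ast\in\{X_+,X_-\}$; since $\ast$ is a permutation of this two-element set, it is enough to see $(X_+)^\ast\simeq X_+$. I would approach this by computing the whole decomposition of $X_+\otimes X_+$ first. For $g\in G$, using the identity $\dim\Hom(A,B)=\dim\Hom(\be,A^\ast\otimes B)$ together with sphericity (so $\dim(g_+^\ast)=\dim(g_+)=1$, forcing $g_+^\ast\simeq(g^{-1})_+$) and the previous lemma $(g^{-1})_+\otimes X_+\simeq X_+$, one finds that the multiplicity of $g_+$ in $X_+\otimes X_+$ equals $\dim\Hom(\be,g_+^\ast\otimes X_+\otimes X_+)=\dim\Hom(\be,X_+\otimes X_+)$, which is independent of $g$; the identical computation with $g_-$ (now $g_-^\ast\simeq(g^{-1})_-$ and $(g^{-1})_-\otimes X_+\simeq X_-$ by the dimension bookkeeping above) shows the multiplicity of $g_-$ in $X_+\otimes X_+$ is likewise independent of $g$. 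Applying $\F$ and comparing with the fusion rule of $\C$, these two multiplicities sum to $1$ for every $g$, so exactly one of $g_+,g_-$ occurs in $X_+\otimes X_+$, with multiplicity one, and it is the one of a single fixed sign $\epsilon\in\{+,-\}$ for all $g$ simultaneously. A final application of $\F$ shows the copies of $X_+$ and $X_-$ in $X_+\otimes X_+$ total $k$, so
$$X_+\otimes X_+\simeq\bigoplus_{g\in G}g_\epsilon\oplus sX_+\oplus tX_-,\qquad s,t\ge0,\quad s+t=k.$$

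The only genuine obstacle is to force $\epsilon=+$. Observe that $\epsilon=+$ exactly when $e_+$ occurs in $X_+\otimes X_+$, i.e. when $\dim\Hom(\be,X_+\otimes X_+)=\dim\Hom\big((X_+)^\ast,X_+\big)\neq0$, i.e. when $(X_+)^\ast\simeq X_+$. Suppose toward a contradiction that $\epsilon=-$, so $(X_+)^\ast\simeq X_-$; then sphericity forces $d=\dim(X_+)=\dim\big((X_+)^\ast\big)=\dim(X_-)=-d$, whence $d=0$. On the other hand, applying the ring homomorphism $\dim\colon K(\tilde\C)\to\BC$ to $X_+\otimes X_+\simeq\bigoplus_{g}g_-\oplus sX_+\oplus tX_-$ gives $d^2=-|G|+(s-t)d$, and $d=0$ then yields $|G|=0$, which is impossible. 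Hence $(X_+)^\ast\simeq X_+$, and therefore $(X_-)^\ast\simeq X_-$ as well (the permutation $\ast$ of $\{X_+,X_-\}$ fixes $X_+$), and $\epsilon=+$.

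It remains to note $X_-\otimes X_-\simeq X_+\otimes X_+$: from $X_-\simeq e_-\otimes X_+$ we get $[X_-]=[e_-][X_+]$ in $K(\tilde\C)$, and since $e_-\otimes e_-\simeq e_+$ (its $\F$-image is $e$ and its dimension is $1$) we have $[X_-]^2=[e_-]^2[X_+]^2=[X_+]^2$, so by semisimplicity $X_-\otimes X_-\simeq X_+\otimes X_+$. Together with the displayed decomposition, this gives the stated formula with $s+t=k$.
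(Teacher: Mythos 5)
Your proof is correct and uses the same ingredients as the paper's: the adjunction $\Hom(g_\pm,X_+\otimes X_+)\cong\Hom(\be,(g^{-1})_\pm\otimes X_+\otimes X_+)$ together with the previous lemma, the forgetful functor to control multiplicities and get $s+t=k$, sphericity to distinguish $X_+$ from $X_-$ as the dual, and $X_-\simeq e_-\otimes X_+$ for the final isomorphism. The only real difference is the order of steps: the paper settles $(X_+)^\ast\simeq X_+$ in one line (sphericity gives $\dim((X_+)^\ast)=\dim(X_+)=d$, and $d\neq-d$ since simples have nonzero dimension) and then reads off the decomposition, whereas you first establish the uniform-sign decomposition and then rule out $\epsilon=-$ via the dimension-count contradiction $d=0\Rightarrow|G|=0$ --- a slightly longer route to the same place.
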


\begin{proof}  Clearly the forgetful functor maps $(X_+)^\ast\mapsto X^\ast,$ therefore $(X_+)^\ast\simeq X_+$ or $X_-.$  Since $\dim(X_-)=-\dim(X_+),$ we conclude that $(X_+)^\ast\simeq X_+.$\\
Since $(X_+)^\ast\simeq X_+,$ we have for each $g\in G,$
$$\Hom(g_+,X_+\otimes X_+)\cong\Hom(1,(g^{-1})_+\otimes X_+\otimes X_+)\cong\Hom(1,X_+\otimes X_+)\neq0.$$
Therefore $g_+$ appears as a summand of $X_+\otimes X_+$ for each $g\in G.$  By applying the forgetful functor, we see that $g_+$ appears as a summand at most once.  This gives us
$$X_+\otimes X_+\simeq \bigoplus_{g\in G}g_+\oplus sX_+\oplus tX_-,$$
with no restriction on $s,t.$  Again applying the forgetful functor gives 
$$X\otimes X\simeq \bigoplus_{g\in G} g\oplus (s+t)X,$$
 and the lemma is proved after noting
 $$X_-\otimes X_-\simeq (e_-\otimes X_+)\otimes (e_-\otimes X_+)\simeq X_+\otimes X_+.$$
\end{proof}

After renaming of $X_+,$ we may assume $s-t\geq0.$

\begin{lemma} For $X_+\in\tilde{C},$ we have $d=\dim(X_+)=\frac{r\pm\sqrt{r^2+4n}}{2}$ and $\dim(\C)=\frac{r^2+4n\pm r\sqrt{r^2+4n}}{2}$ where $n=|G|$ and $r=s-t\geq0.$
\end{lemma}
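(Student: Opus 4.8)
The plan is to compute the dimension of $X_+$ by extracting numerical information from the fusion rule $X_+\otimes X_+\simeq\bigoplus_{g\in G}g_+\oplus sX_+\oplus tX_-$ established in the previous lemma, and then to assemble $\dim(\C)$ from the dimensions of all simple objects of $\tilde\C$. First I would apply $\dim$ to the fusion rule. Since $\dim$ is a ring homomorphism on the Grothendieck ring (this is where sphericality of $\tilde\C$ is used: $\dim$ is well-defined and multiplicative, and $\dim(g_+)=1$, $\dim(X_+)=d$, $\dim(X_-)=-d$), we get
$$d^2=\sum_{g\in G}1+sd+t(-d)=n+(s-t)d=n+rd.$$
Thus $d$ satisfies the quadratic $d^2-rd-n=0$, whose roots are $d=\frac{r\pm\sqrt{r^2+4n}}{2}$. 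This gives the first claimed formula (both signs are recorded since at this stage we have only pinned down $d$ up to the choice of root; the renaming $s-t\ge 0$ fixes $r\ge 0$ but not which square root $d$ is).

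Next I would compute $\dim(\C)$. I would use the fact that $\dim(\tilde\C)=2\dim(\C)$, which holds because the forgetful functor $\F\colon\tilde\C\to\C$ is a degree-two extension — every simple object $V$ of $\C$ has exactly two lifts $V_+,V_-$ with $\dim(V_\pm)=\pm\dim(V)$, so $\sum_{W\in\cO(\tilde\C)}\dim(W)^2=2\sum_{V\in\cO(\C)}\dim(V)^2$. Alternatively, and more self-containedly, I would just sum directly over the simple objects of $\tilde\C$: these are $g_+,g_-$ for $g\in G$ and $X_+,X_-$, contributing
$$\dim(\tilde\C)=\sum_{g\in G}\bigl(1^2+(-1)^2\bigr)+d^2+(-d)^2=2n+2d^2,$$
hence $\dim(\C)=n+d^2$. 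Substituting $d^2=rd+n$ gives $\dim(\C)=2n+rd$, and then substituting $d=\frac{r\pm\sqrt{r^2+4n}}{2}$ yields
$$\dim(\C)=2n+\frac{r^2\pm r\sqrt{r^2+4n}}{2}=\frac{4n+r^2\pm r\sqrt{r^2+4n}}{2}=\frac{r^2+4n\pm r\sqrt{r^2+4n}}{2},$$
as claimed.

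The only genuine subtlety — and what I would flag as the main point requiring care rather than a true obstacle — is justifying that $\dim$ is a well-defined multiplicative function on the Grothendieck ring of $\tilde\C$ taking the stated values on $g_+$ and $X_\pm$; this is exactly the content of $\tilde\C$ being spherical with the canonical spherical structure $i$, together with the normalization $\dim(g_+)=1$ and $d=\Tr_X(\alpha)=\dim(X_+)$ fixed in the setup, and the relation $\dim(X_-)=-d$ proved earlier. Once multiplicativity of categorical dimension in a spherical category is invoked, everything else is the routine algebra above. I would also remark that the $\pm$ in both formulas is the \emph{same} sign (it propagates from the choice of root for $d$), and that $r^2+4n>0$ so $d$ is real, consistent with the pseudounitarity to be established.
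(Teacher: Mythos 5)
Your proof is correct and follows essentially the same route as the paper: both apply the dimension homomorphism to the fusion rule $X_+\otimes X_+\simeq\bigoplus_{g\in G}g_+\oplus sX_+\oplus tX_-$ to obtain $d^2=n+rd$, solve the quadratic, and then compute $\dim(\C)=n+d^2=2n+rd$ (the paper writes $\dim(\C)=|G|+d^2$ directly, whereas you reach the same identity via $\dim(\tilde\C)=2\dim(\C)$, a negligible difference). Your remarks on multiplicativity of $\dim$ in the spherical category $\tilde\C$ and on the consistency of the $\pm$ signs are accurate and, if anything, slightly more careful than the paper's one-line computation.
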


\begin{proof}
We have
$$d^2=\dim(X_+\otimes X_+)=\dim\left(\bigoplus_{g\in G}\oplus sX_+\oplus tX_-\right)=|G|+(s-t)d=n+rd.$$
And
$$\dim(\C)=|G|+d^2=2n+rd=\dfrac{r^2+4n\pm r\sqrt{r^2+4n}}{2}.$$
\end{proof}

\begin{lemma} \label{r} Recall $r=s-t.$\\
(a) If $\tilde{\C}$ is pseudo-unitary, then $r=k,$\\
(b) If $\sqrt{r^2+4n}\in\BZ,$ then $r=k,$\\
(c) If $\sqrt{k^2+4n}\in\BZ,$ then $r=k.$
\end{lemma}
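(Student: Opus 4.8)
The plan is to establish all three parts by exploiting the same underlying fact: the quantity $\dim(X_+) = d$ in the sphericalization $\tilde{\C}$ must, by the structure of the original category $\C$, be algebraically constrained to the Frobenius-Perron dimension $\FP(X)$ of $X$ in $\C$. Since the Frobenius-Perron dimensions satisfy $\FP(X)^2 = n + k\FP(X)$ (this comes from applying $\FP$ to the fusion rule $X\otimes X \simeq \bigoplus_{g\in G} g \oplus kX$ in $\C$), we have $\FP(X) = \frac{k+\sqrt{k^2+4n}}{2}$, the unique positive root. Comparing with the formula $d = \frac{r\pm\sqrt{r^2+4n}}{2}$ from the previous lemma, the goal in each case is to force $r = k$.

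For part (a): if $\tilde{\C}$ is pseudo-unitary, then by the characterization of pseudo-unitary categories (\cite{ENO}), the canonical spherical dimension coincides with the Frobenius-Perron dimension, so $|d| = \FP(X)$. But $\FP(X) > 1$ while we have chosen signs so that $r = s-t \geq 0$; since $d^2 = n + rd$ with $d$ real and $n \geq 1$, the only consistent possibility is $d = \frac{r+\sqrt{r^2+4n}}{2} > 0$, hence $d = \FP(X)$. Then $r$ and $k$ are both equal to $\frac{d^2 - n}{d} = d - n/d$, so $r = k$. For part (b): if $\sqrt{r^2+4n} \in \BZ$, then $d = \frac{r\pm\sqrt{r^2+4n}}{2}$ is a rational algebraic integer, hence an integer, and moreover $d \mid n$ would follow, but more directly $r = \frac{d^2-n}{d}$ forces $d \mid n$; then both $d$ (and the conjugate root) are integers, and since the product of the two roots of $y^2 - ry - n$ is $-n$ and $X$ must be a summand of $X\otimes X$ with the right multiplicity back in $\C$, one identifies $r$ with $k$ by matching the integer fusion coefficient $s - t$ against the known coefficient $k$ via the forgetful functor — indeed $s + t = k$ is already established, and the remaining input is that $s, t \geq 0$ together with the dimension identity pins down $s - t$. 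For part (c): if $\sqrt{k^2+4n} \in \BZ$, then $\FP(X) = \frac{k+\sqrt{k^2+4n}}{2}$ is an integer; since $d$ is a root of $y^2 - ry - n = 0$ and is a character value (a cyclotomic integer, being $\Tr_X(\alpha)$ evaluated on a fusion category), $d$ is an algebraic integer whose Galois conjugates are the values $\Tr_X(\alpha)$ in the various Galois-twisted categories; but in the pseudo-unitary twist $d$ equals $\FP(X) \in \BZ$, so $d$ itself is a rational algebraic integer, i.e., an integer, and we reduce to the situation of part (b).

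The main obstacle I anticipate is part (b): parts (a) and (c) both reduce, via a Galois/pseudo-unitarity argument, to knowing $d$ is a rational integer, and once $d \in \BZ$ the equation $d^2 - rd - n = 0$ combined with $r \geq 0$ and $s + t = k$, $s,t \geq 0$ makes the identification $r = k$ essentially bookkeeping. But in part (b) the hypothesis $\sqrt{r^2+4n}\in\BZ$ is a condition on $r$ itself, not on $k$, so the argument is slightly circular-looking: one gets $d\in\BZ$ for free, but to conclude $r = k$ one genuinely needs to rule out the possibility that some \emph{other} integer value of $s-t$ (still with $s+t=k$, $0 \leq t \leq s$) occurs. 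The resolution should be that the two roots $d$ and $n/d$ (up to sign) of $y^2-ry-n$ are the dimensions $\dim(X_+)$ and $\dim(X_-')$ attached to $X$ in two Galois-conjugate sphericalizations, and the \emph{product} of $X$-dimensions over all Galois conjugates is a rational number fixed by the fusion ring — comparing with the Frobenius-Perron eigenvalue forces $r = k$. I would look to \cite{ENO} (Galois action on fusion categories) for the precise statement making this rigorous.
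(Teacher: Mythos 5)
Your part (a) is essentially the paper's argument, transported from the level of global dimensions to the level of the single object $X$: pseudo-unitarity forces $|d|=\FP(X)$, and then $r=(d^2-n)/d=k$. (One small slip: the negative root $d=\frac{r-\sqrt{r^2+4n}}{2}$ is \emph{not} ruled out by ``$d^2=n+rd$ with $n\geq 1$''; the labelling of $X_\pm$ was fixed by $s-t\geq 0$, not by the sign of $d$. That case still gives $r=k$ --- one finds $-r=k$, hence $r=k=0$ --- but it needs to be said.)

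Parts (b) and (c) have a genuine gap, and it is exactly the one you flagged yourself. In (b), after correctly deducing $d\in\BZ$, your proposed closing move --- that $s+t=k$, $s,t\geq 0$, and the identity $d^2=n+rd$ ``pin down'' $s-t$ --- is false: these constraints are compatible a priori with any $0\leq r\leq k$ of the right parity, and nothing in them forces $t=0$. The missing ingredient is a theorem converting integrality into pseudo-unitarity, after which (b) and (c) both collapse to (a). The paper does precisely this: for (b) it cites \cite[Lemma A.1]{HR} (a fusion category all of whose categorical dimensions are integers is pseudo-unitary), and for (c) it cites \cite[Proposition 8.24]{ENO} (a fusion category of integral Frobenius--Perron dimension is pseudo-unitary). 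Your sketched alternative --- comparing products of dimensions over Galois-conjugate sphericalizations --- is not developed, and making it rigorous would amount to reproving those two results, since the real content there is a positivity/Galois argument on $\dim(\C)/\FP(\C)$, not a computation internal to the fusion ring. Your reduction of (c) to (b) via ``$d$ equals $\FP(X)$ in the pseudo-unitary twist'' likewise presupposes the pseudo-unitarity statement you have not established. So the skeleton is right, but the load-bearing step in (b) and (c) is absent.
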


\begin{proof}
(a) If $\tilde{\C}$ is pseudo-unitary, then $\dim(\tilde{\C})=\FP(\tilde{\C})=2\FP(\C).$  Therefore $r^2+4n+r\sqrt{r^2+4n}=k^2+4n+k\sqrt{k^2+4n},$ and since $|r|=|s-t|\leq k,$ we have $r=k.$

(b) If $\sqrt{r^2+4n}\in\BZ,$ then $d$ is a rational algebraic integer, therefore $d\in \BZ.$  By \cite[Lemma A.1]{HR} $\tilde{\C}$ is pseudo-unitary, and $r=k$ by (a).\\

(c)  If $\sqrt{k^2+4n}\in\BZ,$ then $\FP(X)\in\BZ.$ By \cite[Proposition 8.24]{ENO} $\tilde{\C}$ is pseudo-unitary and $r=k$ by $(a).$

\end{proof}

We will also use the following well-known lemma about algebraic integers.

\begin{lemma} \label{alg} Let $a,b,c,d\in\BZ$ such that $\sqrt{b},\sqrt{d}\not\in\BZ.$  Then $\frac{a+\sqrt{b}}{c+\sqrt{d}}$ is an algebraic integer if and only if $\frac{a-\sqrt{b}}{c-\sqrt{d}}$ is an algebraic integer.
\end{lemma}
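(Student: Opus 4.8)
The plan is to reduce the statement to a standard fact about conjugation in the number field $K=\BQ(\sqrt{b},\sqrt{d})$. First I would dispose of the degenerate cases: if $\sqrt{b}$ and $\sqrt{d}$ generate the same quadratic field, i.e. $bd$ is a perfect square, then $K=\BQ(\sqrt{b})$ is itself quadratic, and the map $\sqrt{b}\mapsto-\sqrt{b}$ is precisely the nontrivial element of $\Gal(K/\BQ)$, which simultaneously sends $\sqrt{d}\mapsto-\sqrt{d}$; in the remaining case $K$ is a biquadratic extension with Galois group $(\BZ/2\BZ)^2$, and the element $\sigma\in\Gal(K/\BQ)$ determined by $\sigma(\sqrt{b})=-\sqrt{b}$, $\sigma(\sqrt{d})=-\sqrt{d}$ is again a well-defined automorphism. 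In either situation we have produced a field automorphism $\sigma$ of $K$ with $\sigma(\sqrt{b})=-\sqrt{b}$ and $\sigma(\sqrt{d})=-\sqrt{d}$.

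The key step is then the observation that the ring of algebraic integers $\cO_K$ is stable under any field automorphism of $K$: if $x\in\cO_K$ satisfies a monic polynomial relation with integer coefficients, then applying $\sigma$ (which fixes $\BZ$ pointwise) shows $\sigma(x)$ satisfies the same relation, hence $\sigma(x)\in\cO_K$. Now suppose $\frac{a+\sqrt{b}}{c+\sqrt{d}}$ is an algebraic integer. Since $a,b,c,d\in\BZ$ with $\sqrt{d}\notin\BZ$, the denominator $c+\sqrt{d}$ is nonzero, so the quotient lies in $K$, hence in $\cO_K$. Applying $\sigma$ gives
$$\sigma\!\left(\frac{a+\sqrt{b}}{c+\sqrt{d}}\right)=\frac{a-\sqrt{b}}{c-\sqrt{d}}\in\cO_K,$$
so $\frac{a-\sqrt{b}}{c-\sqrt{d}}$ is an algebraic integer. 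The converse follows by applying the same argument with $\sigma$ (equivalently, by symmetry of the statement under $\sqrt{b}\mapsto-\sqrt{b}$, $\sqrt{d}\mapsto-\sqrt{d}$).

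The only genuine subtlety — and the one step that requires care rather than being purely routine — is checking that the prescribed assignment $\sqrt{b}\mapsto-\sqrt{b}$, $\sqrt{d}\mapsto-\sqrt{d}$ really does extend to a ring automorphism of $K$, which is exactly where the hypotheses $\sqrt{b},\sqrt{d}\notin\BZ$ are used (they guarantee $[\BQ(\sqrt{b}):\BQ]=[\BQ(\sqrt{d}):\BQ]=2$, so that the minimal polynomials are $t^2-b$ and $t^2-d$ and the sign change is a legitimate conjugation). Once the case split on whether $bd$ is a perfect square is made explicit, this is immediate from the Galois theory of (bi)quadratic fields, and the rest of the argument is the one-line conjugation computation above.
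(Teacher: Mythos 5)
Your proof is correct and follows essentially the same strategy as the paper: both arguments produce a Galois automorphism of $\BQ(\sqrt{b},\sqrt{d})$ sending $\sqrt{b}\mapsto-\sqrt{b}$ and $\sqrt{d}\mapsto-\sqrt{d}$ and then use the stability of algebraic integers under field automorphisms. Your case split on whether $bd$ is a perfect square is a cleaner packaging of the paper's prime-by-prime analysis of the squarefree parts of $b$ and $d$, but the underlying idea is identical.
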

\begin{proof}
Since $b\in\BZ$ is not a square, we may write $b=m\cdot p_1^{\beta_1}\cdots p_k^{\beta_k}$ for some square $m\in\BZ$ and primes $p_1,\ldots,p_k$ and odd integers $\beta_1,\ldots,\beta_k.$  Similarly, we may write $d=n\cdot q_1^{\delta_1}\cdots q_l^{\delta_l}$ for some square $n,$ primes $q_1,\ldots, q_l$ and odd integers $\delta_1,\ldots, \delta_l.$  We will consider two cases.\\
Case (i): Up to ordering $p_1=q_1.$ Let $\sigma\in Gal(\BQ(\sqrt{b},\sqrt{d})/\BQ)$ be the element which maps $\sqrt{p_1}$ to $-\sqrt{p_1}$ and fixes $\sqrt{p_i}$ for $i\neq1$ and $\sqrt{q_j}$ for $j\neq1.$  Then $\sigma(a+\sqrt{b})=a-\sqrt{b},$ $\sigma(c+\sqrt{d})=c-\sqrt{d}$ and $\sigma\left(\frac{a+\sqrt{b}}{c+\sqrt{d}}\right)=\frac{a-\sqrt{b}}{c-\sqrt{d}}.$\\
Case (ii): $p_i\neq q_j$ for all $1\leq i\leq k,$ $1\leq j\leq l.$  Then let $\sigma\in Gal(\BQ(\sqrt{b},\sqrt{d})/BQ)$ be the element which maps $\sqrt{p_1}$ to $-\sqrt{p_1}$, maps $\sqrt{q_1}$ to $-\sqrt{q_1}$ and fixes $\sqrt{p_i}$ for $i\neq1$ and $\sqrt{q_j}$ for $j\neq1.$  Then as above
$\sigma\left(\frac{a+\sqrt{b}}{c+\sqrt{d}}\right)=\frac{a-\sqrt{b}}{c-\sqrt{d}}.$

\end{proof}

\subsection{Proof of Theorem \ref{sph}}
Let $D=\dim(\tilde{\C})=r^2+4n+r\sqrt{r^2+4n},$ and $\Delta=\FP(\tilde{\C})=k^2+4n+k\sqrt{k^2+4n}.$  Then by \cite[Proposition 8.22]{ENO}
$$\dfrac{D}{\Delta}=\dfrac{r^2+4n+r\sqrt{r^2+4n}}{k^2+4n+k\sqrt{k^2+4n}}$$
is an algebraic integer.   Our goal is to prove that $r=k,$ thus proving the theorem.\\
When either $\sqrt{r^2+4n}$ or $\sqrt{k^2+4n}$ are integers, we know $r=k$ by Lemma \ref{r}.  Therefore assume $\sqrt{r^2+4n},\sqrt{k^2+4n}\not\in\BZ.$  Then
$$\dfrac{r^2+4n-r\sqrt{r^2+4n}}{k^2+4n-k\sqrt{k^2+4n}}$$
is an algebraic integer by Lemma \ref{alg}, and thus
$$\left(\dfrac{r^2+4n+r\sqrt{r^2+4n}}{k^2+4n+k\sqrt{k^2+4n}}\right)\left(\dfrac{r^2+4n-r\sqrt{r^2+4n}}{k^2+4n-k\sqrt{k^2+4n}}\right)=\dfrac{4n(r^2+4n)}{4n(k^2+4n)}=\dfrac{r^2+4n}{k^2+4n}$$
is an algebraic integer.  Therefore $r^2=k^2,$ and $r=k,$ since $r\geq0.$

The full tensor category generated by simple objects $\{g_+\}_{g\in G}\cup\{X_+\}$ is tensor equivalent (by the forgetful functor) to $\C.$   Therefore $\C$ is tensor equivalent to a full tensor subcategory of a spherical category and therefore spherical itself.  Moreover if $d>0,$ then $\C$ is pseudo-unitary, and if $d<0,$ then $\C$ is $\varphi$-pseudounitary. $\hfill \square$\\

\subsection{Near-group categories with integer Frobenius-Perron dimension.}
\begin{proposition} \label{pseudo}  If a near-group category $\C$ with near-group fusion rule $(G,k)$ has integer Frobenius-Perron dimension, then either $k=0$ or $k=|G|-1.$  In the latter case $\FP(\C)=|G|(|G|+1).$
\end{proposition}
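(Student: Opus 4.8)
The plan is to argue entirely with Frobenius--Perron dimensions, reducing the statement to an elementary inequality that is then closed off by J.~Siehler's bound $|G|\le k+1$ for $k\ne 0$. Write $n=|G|$. From the near-group fusion rule $X\otimes X\simeq\bigoplus_{g\in G}g\oplus kX$ and the fact that $\FP$ is a homomorphism to $\BR$ positive on simple objects, one gets $\FP(X)^2=n+k\,\FP(X)$, hence $\FP(X)=\frac12\bigl(k+\sqrt{k^2+4n}\bigr)$ and $\FP(\C)=n+\FP(X)^2=2n+k\,\FP(X)$.

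First I would dispose of the case $k=0$, for which there is nothing to prove, and so assume $k\ge 1$. The hypothesis $\FP(\C)\in\BZ$ then gives $k\,\FP(X)=\FP(\C)-2n\in\BZ$, so $\FP(X)$ is rational; being an algebraic integer it lies in $\BZ$. Hence $\sqrt{k^2+4n}=2\,\FP(X)-k$ is a positive integer $m$, with $m\equiv k\pmod 2$ and, since $n\ge 1$, with $m>k$. Writing $m-k=2a$ and $m+k=2b$ for integers $1\le a\le b$ yields $ab=n$, $\;b-a=k$, and $\FP(X)=b$.

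Now Siehler's inequality $n\le k+1$ reads $ab\le(b-a)+1$, that is $(a-1)(b+1)\le 0$; since $b+1>0$ this forces $a=1$. Therefore $n=b$, so $k=b-1=|G|-1$, and $\FP(\C)=2n+k\,\FP(X)=2n+(n-1)n=n(n+1)=|G|(|G|+1)$.

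The genuine content is the middle step: converting integrality of $\FP(\C)$ into integrality of $\FP(X)$, and thereby into the statement that $k^2+4n$ is a perfect square whose square root has the same parity as $k$. Once that is established, the factorization $(m-k)(m+k)=4n$ together with Siehler's bound $|G|\le k+1$ makes everything else automatic, with no recourse to the sphericalization $\tilde{\C}$ used elsewhere in this section.
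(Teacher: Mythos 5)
Your proof is correct and follows essentially the same route as the paper: both reduce the hypothesis $\FP(\C)\in\BZ$ to $\sqrt{k^2+4n}\in\BZ$, factor $4n=(m-k)(m+k)$ (your $a,b$ are the paper's $p$ and $k+p$), and close with Siehler's bound $n\le k+1$. Your intermediate step is slightly cleaner in that you first deduce $\FP(X)\in\BZ$ directly from $k\,\FP(X)\in\BZ$ and the algebraic integrality of $\FP(X)$, where the paper passes through integrality of $\FP(X)^2$, but the substance is identical.
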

\begin{proof}
$\FP(X)=\frac{1}{2}(k+\sqrt{k^2+4n}).$  Therefore if $\FP(\C)\in\BZ,$ then $\FP(X)^2=\frac{1}{2}(k^2+2n+k\sqrt{k^2+4n})$ is an integer, and $\sqrt{k^2+4n}\in\BZ.$  Therefore $k^2+4n=(k+l)^2$ for some $\l\in\BZ_{>0}.$  Expanding, we get $4n=2kl+l^2.$  Therefore $l$ is even and $l=2p$ for $p\in\BZ_{>0}.$ Finally, $k+1\leq kp+p^2\leq n\leq k+1$ by \cite[Theorem 1.1]{S2} when $k\neq0.$

Therefore $k=0$ or $k=n-1=|G|-1.$  In the latter case, $\FP(X)=k+1=|G|,$ and $\FP(\C)=n+(k+1)^2=n+n^2=|G|(|G|+1).$
\end{proof}

\section{M\"uger center of a braided near-group category}

The goal of this section is to prove Proposition \ref{symmetric}.  We will use mostly definitions and results from \cite{DGNO}.

Let $\C$ be a braided tensor category. From \cite{Mu2}, we define the M\"uger center of $\C$ to be the full tensor subcategory of $\C$ with objects 
$$\{X\in \C|\sigma_{Y,X}\sigma_{X,Y}=\id_{X\otimes Y}\ \forall Y\in\C\}.$$
  We denote the M\"uger center of $\C$ by $\C'.$

\subsection{The M\"uger center of a near-group category contains all invertible objects.}
Recall the following definitions for braided fusion categories.

\begin{definition} \cite[Section 2.2; Section 3.3]{DGNO}
Let $\C$ be a fusion category:\\
(a) Define $\C_{ad}$ to be the fusion subcategory generated by $Y\otimes Y^\ast$ for $Y\in\cO(\C).$\\
(b) For $\K$ a fusion subcategory of $\C,$ we define the \emph{commutator of $\K$} to be the fusion subcategory $\K^{co}\subseteq\C,$ generated by all simple objects $Y\in\cO(\C),$ where $Y\otimes Y^\ast\in\cO(\K).$
\end{definition}

Let $\C$ be a near-group category with near-group fusion rule $(G,k).$  Recall that for this proposition we assume $k\neq0.$

\begin{lemma} \label{adjoint} $\C_{ad}=\C.$
\end{lemma}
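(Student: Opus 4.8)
The plan is to show that the adjoint subcategory $\C_{ad}$ contains the non-invertible object $X$; since $\C_{ad}$ automatically contains all of $\cO(\C)$ as well (it is a fusion subcategory containing $e = X \otimes X^\ast$'s trivial summand, and in fact $X\ot X^\ast \simeq X \ot X$ contains every $g\in G$), this forces $\C_{ad}$ to contain every simple object of $\C$, hence $\C_{ad}=\C$.

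First I would recall the near-group fusion rule established in Section 2.1: since $X^\ast \simeq X$, we have
$$X \otimes X^\ast \simeq X \otimes X \simeq \bigoplus_{g\in G} g \oplus kX.$$
By definition $\C_{ad}$ is the fusion subcategory generated by all objects $Y \otimes Y^\ast$ with $Y \in \cO(\C)$, together (implicitly) with $X\otimes X^\ast$; in particular $X \otimes X^\ast \in \C_{ad}$. Since $k \neq 0$, the object $kX$ is a nonzero summand of $X\otimes X^\ast$, so $X$ is a subobject of an object lying in $\C_{ad}$. As $\C_{ad}$ is a fusion subcategory (hence closed under subobjects), we conclude $X \in \C_{ad}$. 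Likewise each $g \in G$ appears as a summand of $X\otimes X^\ast \in \C_{ad}$, so $g \in \C_{ad}$ for all $g\in G$. Thus $\C_{ad}$ contains every simple object of $\C$, which gives $\C_{ad} = \C$.

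The only thing to be careful about is the precise definition of $\C_{ad}$ used here: in \cite[Section 3.3]{DGNO} it is generated by $Y\otimes Y^\ast$ for $Y$ ranging over \emph{all} objects, or equivalently all simple objects, of $\C$ — not merely the invertible ones — so $X\otimes X^\ast$ is indeed among the generators. If one instead only had the invertibles as generators, the argument would fail, so I would make this reading explicit. There is no real obstacle here; the hypothesis $k\neq 0$ is exactly what is needed to see $X$ inside $X\otimes X^\ast$, and the statement follows immediately. This lemma will then feed into the analysis of the M\"uger center, since $\C_{ad}=\C$ constrains how large $\C'$ can be.
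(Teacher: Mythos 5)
Your proof is correct and is essentially the paper's own argument: since $X^\ast\simeq X$ and $k\neq 0$, the object $X\otimes X^\ast\simeq\bigoplus_{g\in G}g\oplus kX$ already contains every simple object of $\C$ as a summand, so the subcategory it generates is all of $\C$. Your caveat about the definition of $\C_{ad}$ is well taken --- the paper's phrase ``$Y\in\cO(\C)$'' must indeed be read in the sense of \cite{DGNO} (all simple objects, not merely the invertible ones, despite the paper's earlier use of $\cO(\C)$ for the group of invertibles) for the argument to go through.
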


\begin{proof} This is clear as $X\simeq X^\ast$ and $X\otimes X\simeq G\oplus kX,$ thus contains all simple objects of $\C$ as summands.
\end{proof}

\begin{lemma}\cite[Proposition 3.25]{DGNO} \label{DGNO 3.25} Let $\K$ be a fusion subcategory of a braided fusion category $\C.$  Then $(\K_{ad})'=(\K')^{co}.$
\end{lemma}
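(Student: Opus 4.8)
This is \cite[Proposition 3.25]{DGNO}, where it is proved by string calculus for an arbitrary braided fusion category; I will sketch a self-contained argument valid in the spherical case, which by Theorem~\ref{sph} covers every braided near-group category — the only situation in which this lemma is used here. The plan is to reduce the equality to a statement about simple objects. Both $(\K_{ad})'$ (a centralizer) and $(\K')^{co}$ (a commutator) are fusion subcategories of $\C$, so it suffices to identify their simple objects. Since $\K_{ad}$ is generated by the objects $Y\otimes Y^\ast$ with $Y\in\cO(\K)$, and since the property ``$Z$ centralizes $-$'' is stable under tensor products, duals and direct summands, a simple object $Z\in\cO(\C)$ lies in $(\K_{ad})'$ if and only if $Z$ centralizes $Y\otimes Y^\ast$ for every $Y\in\cO(\K)$. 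On the other side, $Z$ is one of the generators of $(\K')^{co}$ if and only if $Z\otimes Z^\ast\in\K'$, i.e. if and only if $Z\otimes Z^\ast$ centralizes $Y$ for every $Y\in\cO(\K)$.

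Thus everything rests on the following symmetry: \emph{for simple objects $Z,Y\in\C$, the object $Z$ centralizes $Y\otimes Y^\ast$ if and only if $Z\otimes Z^\ast$ centralizes $Y$.} Granting this, applying it to the fixed $Z$ and each $Y\in\cO(\K)$ shows that $\cO\big((\K_{ad})'\big)$ is exactly the set of simple $Z\in\cO(\C)$ with $Z\otimes Z^\ast\in\K'$, which is precisely the generating set of $(\K')^{co}$. Since each such $Z$ lies in the fusion subcategory $(\K_{ad})'$, we get $(\K')^{co}\subseteq(\K_{ad})'$; and since $(\K_{ad})'$ is generated by its simple objects, all of which are generators of $(\K')^{co}$, we get $(\K_{ad})'\subseteq(\K')^{co}$. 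Hence equality.

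To prove the symmetry I would pass to the unnormalized $S$-matrix $\tilde S_{A,B}=\Tr_{A\otimes B}(\sigma_{B,A}\sigma_{A,B})$, which is available because the category is spherical. The inputs are standard: $\tilde S$ is symmetric and satisfies $\tilde S_{Z^\ast,Y}=\tilde S_{Z,Y^\ast}$; for fixed $Z$ the assignment $B\mapsto \tilde S_{Z,B}/\dim(Z)$ is a character of the Grothendieck ring, so $\tilde S_{Z,B\otimes C}=\tilde S_{Z,B}\tilde S_{Z,C}/\dim(Z)$; and $A$ centralizes $B$ if and only if $\tilde S_{A,B}=\dim(A)\dim(B)$. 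Using $\dim(Y^\ast)=\dim(Y)$ and $\dim(Z^\ast)=\dim(Z)$, the condition ``$Z$ centralizes $Y\otimes Y^\ast$'' becomes $\tilde S_{Z,Y}\tilde S_{Z,Y^\ast}=(\dim(Z)\dim(Y))^2$, while ``$Z\otimes Z^\ast$ centralizes $Y$'' becomes $\tilde S_{Z,Y}\tilde S_{Z^\ast,Y}=(\dim(Z)\dim(Y))^2$; these two equations coincide since $\tilde S_{Z^\ast,Y}=\tilde S_{Z,Y^\ast}$.

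The main obstacle is exactly the implication ``$\tilde S_{A,B}=\dim(A)\dim(B)\Rightarrow A$ centralizes $B$'' used for the non-simple objects $Y\otimes Y^\ast$ and $Z\otimes Z^\ast$: the double braiding $\sigma_{B,A}\sigma_{A,B}$ is diagonalizable with root-of-unity eigenvalues, and to conclude from the trace identity that every eigenvalue equals $1$ one needs positivity of the relevant dimensions. This is where the field automorphism $\varphi$ from Theorem~\ref{sph} enters: after replacing all dimensions and $S$-matrix entries by their $\varphi$-images the category is $\varphi$-pseudounitary, positivity holds, the eigenvalues are forced to be $1$, and the conclusion descends back along $\varphi$. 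Everything else is formal bookkeeping with the generating sets of $\K_{ad}$ and $(\K')^{co}$ and with stability of centralizers under the tensor operations.
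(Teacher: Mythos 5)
Your reduction is sound and your sketch is essentially correct, but note that the paper itself offers no proof of this lemma at all: it is quoted verbatim as \cite[Proposition 3.25]{DGNO}, so there is nothing internal to compare against. What you have done differently is supply an actual argument, valid in the spherical (pre-modular) case, which by Theorem \ref{sph} does cover every use of the lemma in this paper (it is only ever applied with $\C$ a braided near-group category, indeed with $\K=\C$). The bookkeeping step — identifying $\cO\bigl((\K_{ad})'\bigr)$ with the generating set of $(\K')^{co}$ and then using that a fusion subcategory equals the subcategory generated by its simples — is clean and correct, and your key symmetry ``$Z$ centralizes $Y\otimes Y^\ast$ iff $Z\otimes Z^\ast$ centralizes $Y$'' does follow from $\tilde S_{Z^\ast,Y}=\tilde S_{Z,Y^\ast}$ together with the character property of $B\mapsto\tilde S_{Z,B}/\dim(Z)$. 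Two caveats on what each route buys. First, your hardest ingredient — ``$\tilde S_{A,B}=\dim(A)\dim(B)$ implies $A$ centralizes $B$,'' via the eigenvalues $\theta_W/(\theta_A\theta_B)$ of the double braiding being roots of unity (Vafa) and the equality case of the triangle inequality after applying $\varphi$ — is itself essentially \cite[Proposition 3.32]{DGNO}, so the argument is not as self-contained as it looks; you are trading one citation for another of comparable depth. Second, DGNO's Proposition 3.25 holds for arbitrary braided fusion categories with no sphericality hypothesis, whereas your argument needs the spherical structure (and Theorem \ref{sph} to supply it), so it proves strictly less than the statement as quoted; for the purposes of this paper that loss is harmless, but the lemma as stated should then either be weakened to the pre-modular case or kept as a citation.
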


Letting $\K=\C$ in Lemma \ref{DGNO 3.25}, we get

\begin{proposition} \label{center}  Let $\C$ be a braided near-group category with fusion rule $(G,k).$  If $k>0,$ then the M\"uger center $\C'\subseteq\C$ is either $\C$ or $\Ve_G.$
\end{proposition}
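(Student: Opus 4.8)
The plan is to apply Lemma~\ref{DGNO 3.25} with $\K = \C$ and feed in the computation of $\C_{ad}$ from Lemma~\ref{adjoint}. By that lemma $\C_{ad} = \C$, so the left-hand side of Lemma~\ref{DGNO 3.25} becomes $(\C_{ad})' = \C'$. The right-hand side is $(\C')^{co}$, the commutator of the fusion subcategory $\C'$. Thus we obtain the identity $\C' = (\C')^{co}$, and the whole task reduces to analyzing which fusion subcategories $\K \subseteq \C$ can satisfy $\K = \K^{co}$, together with the constraint that $\K$ is in fact the M\"uger center (so in particular $\K$ contains the unit and is a genuine fusion subcategory of the near-group category $\C$).

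Next I would enumerate the fusion subcategories of a near-group category $\C$ with fusion rule $(G,k)$, $k \neq 0$. Any fusion subcategory $\K$ is determined by which simple objects it contains; since the only non-invertible simple object is $X$ and $X \otimes X \simeq \bigoplus_{g\in G} g \oplus kX$ contains \emph{every} invertible object (using $k \neq 0$ only to note $X$ itself reappears, but the key point is all of $G$ appears), as soon as $X \in \K$ we get all of $G$ in $\K$, hence $\K = \C$. If $X \notin \K$, then $\K$ consists only of invertible objects, so $\K = \Ve_H$ for some subgroup $H \leq G$. Therefore the candidates for $\C'$ are $\C$ itself and $\Ve_H$ for subgroups $H \leq G$.

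It remains to rule out $\C' = \Ve_H$ for proper subgroups $H \subsetneq G$, using $\C' = (\C')^{co}$. Here I would compute $(\Ve_H)^{co}$: by definition it is generated by simple objects $Y$ with $Y \otimes Y^\ast \in \cO(\Ve_H)$. For an invertible $Y = g$, $g \otimes g^\ast = e \in H$ always, so every invertible object lies in $(\Ve_H)^{co}$; that already forces $(\Ve_H)^{co} \supseteq \Ve_G$. Hence if $\C' = \Ve_H$ we'd need $\Ve_H = (\Ve_H)^{co} \supseteq \Ve_G$, forcing $H = G$. (One should also check $X \notin (\Ve_H)^{co}$, i.e.\ that $X \otimes X^\ast \simeq X \otimes X$ is not a single invertible object — true since it has $|G| + k \geq 2$ summands — so that the computation gives exactly $\Ve_G$, not $\C$; either way $H = G$.) Combining, $\C'$ is either $\C$ or $\Ve_G$, as claimed.

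I expect the main obstacle to be purely bookkeeping: making the correspondence between fusion subcategories and subgroups precise, and being careful that ``the fusion subcategory generated by a set of simple objects'' is closed under the relevant operations (tensor product, duals, subobjects) so that the classification of subcategories is genuinely exhausted by $\{\C\} \cup \{\Ve_H : H \leq G\}$. Once that is in place, the two short computations with Lemma~\ref{DGNO 3.25} and the definition of the commutator close the argument with no further input.
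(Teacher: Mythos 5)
Your proposal is correct and follows essentially the same route as the paper: apply Lemma~\ref{DGNO 3.25} with $\K=\C$, use $\C_{ad}=\C$ to get $\C'=(\C')^{co}$, and observe that any commutator subcategory contains all invertible objects since $g\otimes g^\ast\simeq 1$, forcing $\C'\supseteq\Ve_G$ and hence $\C'\in\{\Ve_G,\C\}$. The paper compresses this to a single line, and your extra bookkeeping (enumerating subcategories, checking $X\notin(\Ve_H)^{co}$) is harmless but not needed.
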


\begin{proof}
$\C'=(\C_{ad})'=(\C')^{co}\supset G.$
\end{proof}

In particular, $\C$ can only be modular if $G$ is trivial.

\subsection{Symmetric tensor categories}

Let $A$ be a group.  Deligne \cite{D} defines $\Rep(A,z)$ to be the category of finite dimensional super representations $(V,\rho)$ of $A,$ where $\rho(z)$ is the automorphism of parity of $V.$  In \cite{DGNO} this is presented as the fusion category $\Rep(G)$ with $z\in Z(G)$ satisfying $z^2=1$ and braiding $\sigma'$ given by
$$\sigma'_{UV}(u\otimes v)=(-1)^{ij}v\otimes u\ \mathrm{if}\ u\in U, v\in V, zu=(-1)^iu, zv=(-1)^jv.$$
In \cite[Corollaire 0.8]{D} it is shown that any symmetric fusion category is equivalent to $\Rep(A,z)$ for some choice of finite group $A,$ and central element $z\in A$ with $z^2=1.$  If $z\neq1$ we call such a category super-Tannakian.  If $z=1,$ then $\Rep(A,z)=\Rep(A)$ and it is called Tannakian.  Note that $\Rep(A/\langle z\rangle)$ is the subcategory of modules $M$ where $z$ acts trivially on $M.$  This is a maximal Tannakian subcategory of $\Rep(A,z).$

Recall (see \cite[Example 2.42]{DGNO}) $s\Ve$ is defined to be the category $\Rep(\BZ/2\BZ,z),$ where $z$ is the non-trivial element of $\BZ/2\BZ.$  The following lemma is due to \cite[Lemma 5.4]{Mu1} and \cite[Lemma 3.28]{DGNO}.  This lemma will be used  to show that particular categories do not exist.

\begin{lemma} \label{twist} Let $\C$ be a braided fusion category and $\delta\in \C'$ an invertible object such that the fusion subcategory of $\C$ generated by $\delta$ is braided equivalent to $s\Ve.$  Then for all $V\in\cO(\C),$  $\delta\otimes V$ cannot be mapped to $V$ by some tensor automorphism.
\end{lemma}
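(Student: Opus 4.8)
The plan is to argue by contradiction. First I would unwind the statement: it suffices to show that for no simple object $V$ of $\C$ is $\delta\otimes V$ isomorphic to $V$. (In the application to near-group categories the case that matters is $V=X$, the non-invertible simple object, for which $g\otimes X\simeq X$ already holds for every invertible $g$; so the lemma will say precisely that $\C'$ contains no copy of $s\Ve$.) So suppose $f\colon\delta\otimes V\to V$ is an isomorphism with $V$ simple.

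Next I would pass to a ribbon structure. The lemma is stated for arbitrary braided fusion categories, and in that generality one simply quotes \cite[Lemma 5.4]{Mu1} and \cite[Lemma 3.28]{DGNO}; but for the near-group categories to which we apply it, Theorem \ref{sph} supplies a spherical — indeed pseudo-unitary, for a suitable field automorphism — structure, and a spherical braided fusion category is ribbon. Write $\theta$ for the twist, and note $\dim(W)\neq0$ for every simple $W$. Since $\delta$ lies in the M\"uger center $\C'$, the double braiding $\sigma_{V,\delta}\circ\sigma_{\delta,V}=\id_{\delta\otimes V}$, so the balancing axiom gives $\theta_{\delta\otimes V}=(\theta_\delta\otimes\theta_V)\circ\sigma_{V,\delta}\circ\sigma_{\delta,V}=\theta_\delta\otimes\theta_V$; comparing scalars on the simple object $\delta\otimes V\cong V$ yields $\theta_\delta=1$. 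Likewise multiplicativity of categorical dimension together with $\dim(V)\neq0$ forces $\dim(\delta)=1$.

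Finally I would play these two constraints against the hypothesis $\langle\delta\rangle\simeq s\Ve$, which forces the self-braiding $c_{\delta,\delta}=-\id$ (this being the self-braiding of the nontrivial object of $s\Ve$), so in particular $\delta$ is self-dual with $\delta^{\otimes2}\cong\be$. For an invertible object in a ribbon category the Hopf-link evaluation gives the standard relation $c_{\delta,\delta}=\theta_\delta\,\dim(\delta)^{-1}$ (here $\dim(\delta)=\pm1$, since $\delta$ is invertible in a spherical category). Combining, $-1=c_{\delta,\delta}=\theta_\delta\,\dim(\delta)^{-1}=1$, a contradiction. The step I expect to be the main obstacle is securing a ribbon structure on which to run the balancing and Hopf-link computations: in full generality that is exactly the (open) sphericality question quoted in the introduction, so one leans on Theorem \ref{sph} in our setting, or on the cited lemmas otherwise. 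Pinning down the precise relation among $\theta_\delta$, $\dim(\delta)$ and $c_{\delta,\delta}$ for invertible $\delta$ is the only other thing to verify, and it is routine.
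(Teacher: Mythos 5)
There is a genuine gap, and it occurs at the very first step: your reduction to showing that $\delta\otimes V$ is not \emph{isomorphic} to $V$ replaces the lemma by a strictly weaker statement, and the weaker statement does not suffice for the lemma's one application. The lemma must rule out a \emph{tensor automorphism} $F$ of the category carrying $\delta\otimes V$ to $V$; in the application ($\T$ super-Tannakian with simples $1,\delta,T_1,\ldots,T_q$ and a group acting transitively on the $T_i$), one has $\delta\otimes T_1\simeq T_s$ with $s\neq 1$ in general, and the offending automorphism is the one supplied by transitivity that moves $T_s$ back to $T_1$. If you try to deduce your version from the automorphism version you get stuck: $F(\delta\otimes V)\simeq V$ only gives $F(\delta)\otimes F(V)\simeq V$, which is not of the form $\delta\otimes W\simeq W$ (and iterating $F$ only returns $\delta^{\otimes n}\otimes V\simeq V$, vacuous when the order $n$ of $F$ is even --- exactly the case $H=\BZ/2\BZ$ that matters). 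Your entire computation (comparing the scalars $\theta_{\delta\otimes V}=\theta_\delta\theta_V$ and $\theta_V$ on a single simple object to force $\theta_\delta=1$) is available only under the hypothesis $\delta\otimes V\simeq V$, so it proves a correct but different fact.

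What is actually needed is an invariant of simple objects, preserved by the (braided) tensor automorphisms in question, that separates $\delta\otimes V$ from $V$, and this is what the paper's proof constructs: the scalar $d_+(V)$ built from the Drinfeld morphism $\mu_V:V\to V^{\ast\ast}$ (a composition of $\coeval$, the braiding, and $\eval$). This is defined in any braided fusion category, is nonzero on simple objects, and is preserved by braided tensor autoequivalences. Centrality of $\delta$ gives $\mu_{\delta\otimes V}=\mu_\delta\otimes\mu_V$, the hypothesis $\langle\delta\rangle\simeq s\Ve$ gives $\mu_\delta=-\id_\delta$, hence $d_+(\delta\otimes V)=-d_+(V)\neq d_+(V)$, and no admissible automorphism can carry $\delta\otimes V$ to $V$. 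Note that this also dissolves what you flagged as your main obstacle: $d_+$ requires no spherical or ribbon structure, so there is no need to invoke Theorem \ref{sph} --- which in any case concerns the near-group category $\C$ itself, whereas the lemma is applied to the de-equivariantized category $\D$, not a near-group category.
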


\begin{proof}
For $V\in\cO(\C),$ let $\mu_V$ be defined to be the compostion
$$V\xrightarrow{\id_V\otimes\coeval_{V^\ast}}V\otimes V^\ast\otimes V^{\ast\ast}\xrightarrow{\sigma_{V,V^\ast}\otimes\id_{V^{\ast\ast}}}V^\ast\otimes V\otimes V^{\ast\ast}\xrightarrow{\eval_V\otimes_{V^{\ast\ast}}}V^{\ast\ast}$$
where $\sigma$ is the braiding on $\C.$  It is well known (see \cite[Lemma 2.2.2]{BK}) that for $V,W\in\cO(\C),$ $\mu_V$ and $\mu_W$ satisfy
$$\mu_V\otimes\mu_W=\mu_{V\otimes W}\sigma_{W,V}\sigma_{V,W}.$$
 Therefore since, $\delta\in\C',$ we have
$$\mu_{\delta\otimes V}=\mu_{\delta}\otimes\mu_V,$$
for all $V\in\cO(\C).$ Since $\delta$ generates $s\Ve,$ we know that $\sigma'(\delta,\delta)=-\id_1$ and $\mu_\delta=-\id_\delta.$  Recall \cite{ENO} for a simple object $U\in\cO(\C),$ we define $d_+(U)$ to be the composition
$$1\xrightarrow{\coeval_V}V\otimes V^\ast\xrightarrow{\mu\otimes \id_{V^\ast}}X^{\ast\ast}\otimes V^\ast\xrightarrow{\eval_{V^\ast}}1.$$
Then
\begin{eqnarray*}
d_+(\delta\otimes V)&=&\eval_{(\delta\otimes V)^\ast}\circ(\mu_{\delta\otimes V}\otimes\id_1)\circ\coeval_{\delta\otimes V}\\
&=&\eval_{\delta^\ast}\circ(-\id_\delta\otimes \id_1)\circ\coeval_\delta\cdot\eval_{V^\ast}\circ(\mu_V\otimes \id_1)\coeval_V\\
&=&-d_+(V).
\end{eqnarray*}
Since $d_+(V)\neq0,$ we have $d_+(V)\neq d_+(\delta\otimes V),$ and $V$  cannot be mapped to $\delta\otimes V$ by some automorphism.

\end{proof}

\subsection{Proof of Proposition \ref{symmetric}}

Let $\C$ be a symmetric near-group category with near-group fusion rule $(G,k).$ by \cite[Corollaire 0.8]{D} $\C$ is equivalent (as a tensor category) to $\Rep(H)$ for some finite group $H.$  Since $\C$ is a near-group category, $H$ has exactly one irreducible representation of dimension greater than one.  The following lemma classifies such groups. 
\begin{lemma}\cite{Se} \label{seitz} A group $G$ has exactly one irreducible $\C$-representation of degree greater than one if and only if (i) $|G|=2^k,$ $k$ is odd, $[G,G]=Z(G),$ and $|[G,G]|=2,$ or (ii) $G$ is isomorphic to the group of all transformations $x\mapsto ax+b,$ $a\neq0,$  on a field of order $p^n\neq2.$
\end{lemma}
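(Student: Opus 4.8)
The plan is to convert the hypothesis into rigid information about the ordinary character theory of $G$, reduce to a short dichotomy, and then identify the two families directly. Write $\chi$ for the unique irreducible complex character of $G$ with $\chi(1)=d>1$, and put $N:=[G,G]$. The linear characters act on $\mathrm{Irr}(G)$ by multiplication, preserving degrees, so every linear $\lambda$ fixes $\chi$; comparing character values forces $\chi(g)=0$ for all $g\notin N$. Since $\bigcap_{\rho\in\mathrm{Irr}(G)}\ker\rho=1$ and this intersection equals $\ker\chi\cap N$, while any $g\in\ker\chi$ lies in $N$ (as $\chi(g)=d\neq 0$), we get $\ker\chi=1$: thus $\chi$ is faithful and $Z(G)$ is cyclic. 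If $[N,N]\neq 1$ then $G/[N,N]$ is nonabelian and would produce, after inflation, a nonlinear irreducible other than $\chi$, which is impossible; hence $N$ is abelian, so $G$ is metabelian and $G/N$ is abelian.

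Next I apply Clifford's theorem to $N\trianglelefteq G$: $\chi|_N=e(\theta_1+\cdots+\theta_t)$ for a single $G$-orbit of linear characters of $N$, with $d=et$, and (using $\sum_g|\chi(g)|^2=|G|$, the vanishing of $\chi$ off $N$, and $|G|=[G:N]+d^2=[G:N]\,|N|$) the relations $e^2t=[G:N]$ and $t=|N|-1$. So $G$ acts transitively on the nontrivial characters of $N$; consequently $N$ is elementary abelian of order $p^m$, and the faithful quotient $Q:=G/C_G(N)$ acts on $N\cong\BF_p^m$ transitively, hence — being abelian — regularly, on the nonzero vectors. A standard argument (the $\BF_p$-subalgebra of $\mathrm{End}(N)$ generated by such a $Q$ is a field) identifies $Q$ with the Singer cycle $\BF_{p^m}^{\times}$ and $N$ with $\BF_{p^m}^{+}$, so $[G:C_G(N)]=p^m-1$ and $[C_G(N):N]=e^2$. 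Now split on $p^m$. If $p^m=2$ then $\Aut(N)$ is trivial, so $N\subseteq Z(G)$; faithfulness together with the vanishing of $\chi$ off $N$ forces $Z(G)=N$, of order $2$. Then $G$ has class $\leq 2$ and $Z(G)$ has no elements of odd order, so every element of odd prime order is central, hence trivial; thus $G$ is a $2$-group with $|G|=2e^2=2^k$, $[G,G]=Z(G)$, $|[G,G]|=2$ and $k$ odd, which is case (i). The converse is the standard check that in a $2$-group with $[G,G]=Z(G)$ of order $2$ every nonlinear irreducible is faithful, hence vanishes off $Z(G)$ and has degree-square equal to $[G:Z(G)]$, leaving room for exactly one.

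Assume now $p^m>2$; the crux is to prove $e=1$. Put $T:=C_G(N)$, so $N\subseteq Z(T)$, $T\trianglelefteq G$, $G/T\cong\BF_{p^m}^{\times}$, $[T:N]=e^2$, and consider the $G$-equivariant alternating bilinear form $c\colon T/N\times T/N\to N$, $c(\bar x,\bar y)=[x,y]$, well defined and bilinear because $[T,T]\subseteq N\subseteq Z(T)$. Since $\chi$ is the unique nonlinear irreducible, the $G$-orbit of $\theta_1$ is all of $\widehat N\setminus\{1\}$, so $\mathrm{Stab}_G(\theta_1)=T$ and $\chi=\mathrm{Ind}_T^G\psi$ for the unique constituent $\psi$ of $\chi|_T$ over $\theta_1$; comparing restrictions to $N$ gives $\psi(1)=e$. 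As $N\subseteq Z(T)$ and $\theta_1$ is $T$-invariant, $\psi$ corresponds to an $e$-dimensional irreducible of the twisted group algebra $\BC_{\omega_{\theta_1}}[T/N]$, whose dimension is $|T/N|=e^2$; hence this algebra is $M_e(\BC)$, equivalently the form $\theta_1\circ c$ on $T/N$ is nondegenerate, so $c$ itself is nondegenerate. On the other hand, since $G/N$ is abelian the restriction $\widehat{G/N}\to\widehat{T/N}$ is surjective, so every character of $T/N$ is the restriction of a linear character of $G$ and is therefore $G$-conjugation invariant; thus $G/T$ acts trivially on $T/N$. By equivariance, $c(T/N,T/N)$ then lies in the $G/T$-fixed part of $N$, which is $0$ because $\BF_{p^m}^{\times}$ acts by scalar multiplication and $p^m>2$. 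This contradicts nondegeneracy of $c$ unless $T/N=1$, i.e. $e=1$.

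With $e=1$ the orders $|N|=p^m$ and $[G:N]=p^m-1$ are coprime, so Schur--Zassenhaus gives $G=N\rtimes H$ with $H\cong G/N=G^{\mathrm{ab}}$ abelian, acting faithfully and regularly on $N\setminus\{0\}$; the Singer argument again yields $H\cong\BF_{p^m}^{\times}$ acting by multiplication, so $G\cong\BF_{p^m}^{+}\rtimes\BF_{p^m}^{\times}$, the group of maps $x\mapsto ax+b$ on $\BF_{p^m}$, with $p^m\neq 2$ — case (ii). The converse is the direct computation that such a $G$ has $G^{\mathrm{ab}}\cong\BF_{p^m}^{\times}$ and that $\mathrm{Ind}_N^G\theta$ for any nontrivial $\theta$ is an irreducible character of degree $p^m-1$, the unique nonlinear one. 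The single genuine obstacle in all of this is the equality $e=1$ when $p^m>2$: a priori $\chi|_N$ could be ramified, and eliminating that possibility is exactly where one must use the \emph{uniqueness} of the nonlinear irreducible twice — to force the fully ramified Clifford structure on $C_G(N)$, and (through metabelianity) to force the $G/C_G(N)$-action on $C_G(N)/N$ to be trivial; everything else is routine bookkeeping.
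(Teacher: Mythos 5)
The paper offers no proof of this lemma at all: it is quoted verbatim from Seitz's 1968 paper, so your argument is being measured against the literature rather than against anything in the text. Your proof is correct and essentially complete, and it follows the natural character-theoretic route (close in spirit to Seitz's original): the unique nonlinear $\chi$ is fixed by twisting with linear characters, hence vanishes off $N=[G,G]$ and is faithful; uniqueness forces $N$ abelian; Clifford theory plus the two counts $e^2t=[G:N]$ and $t=|N|-1$ give transitivity on $\widehat{N}\setminus\{1\}$ and $N$ elementary abelian; the Singer-cycle argument identifies $G/C_G(N)$ with $\BF_{p^m}^{\times}$; and the dichotomy $p^m=2$ versus $p^m>2$ yields cases (i) and (ii). Your handling of the crux $e=1$ --- the $G$-equivariant commutator form on $C_G(N)/N$ is nondegenerate because $\theta_1$ is fully ramified in $C_G(N)$, yet identically zero because $G/N$ abelian forces $G$ to centralize $C_G(N)/N$ while $\BF_{p^m}^{\times}$ has no nonzero fixed vectors when $p^m>2$ --- is clean and correct. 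The one step asserted without justification is the passage from transitivity of $G$ on $\widehat{N}\setminus\{1\}$ (which is what Clifford theory actually gives you) to transitivity of $G/C_G(N)$ on $N\setminus\{0\}$ (which is what the later fixed-point computation uses): these are dual statements, and you should either invoke Brauer's permutation lemma or run the Singer argument on $\widehat{N}$ first and transport the resulting field structure to $N$ by duality; it is a true and standard point, but as written it is a gap of one sentence, not of substance.
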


By \cite[Corollaire 0.8]{D} and Lemma \ref{seitz}, $\C$ is tensor equivalent to $\Rep(H)$ where $|H|=2^l,$ or $H$ is isomorphic to the group of all transformations $x\mapsto ax+b,$ $a\neq0,$ on a field of order $p^l\neq2.$\\
If $|G|=2^l,$  then by Lemma \ref{pseudo}, $\Rep(G)$ is Tambara-Yamagami if it is near-group.  Therefore we may assume that $H$ is the latter group described above.  Such a group $H$ is isomorphic to $\BF_{p^l}\rtimes\BF_{p^l}^\ast$ since there is a split short exact sequence
$$1\to\BF_{p^l}\to H\to\BF_{p^l}^\ast\to1.$$
Therefore $\C$ is tensor equivalent to $\Rep(\BF_{p^l}\rtimes\BF_{p^l}^\ast).$  Since $Z(H)=1,$ there does not exist a braiding on $\Rep(H)$ making it a super-Tannakian category.  Therefore $\C$ is braided tensor equivalent to $\Rep(\BF_{p^l}\rtimes\BF_{p^l}^\ast).$ $\hfill \square$

\subsection{Equivariantization of a braided tensor category}

Let $\C$ be a braided tensor category.  
\begin{definition} \cite[Section 4.2]{DGNO} (i) Let $\underline{\Aut}^{\mathrm{br}}(\C)$ be the category whose objects are braided tensor equivalences and whose morphisms are isomorphism of braided tensor functors.\\
(ii) For a group G, let $\underline{G}$ be the tensor category whose objects are elements of $G,$ whose morphisms are the identity morphisms and whose tensor product is given by group multiplication.\\
(iii) We say that \emph{$G$ acts on $\C$ viewed as a braided tensor category} if there is a monoidal functor $\underline{G}\to\underline{\Aut}^{\mathrm{br}}(\C).$\\
(iv) We say $\C$ is a \emph{braided tensor category $\C$ over $\mathcal{E}$} if it is equipped with a braided functor $\mathcal{E}\to\C'.$
\end{definition}

Let $G$ be a group, and $G$ act on $\C$ viewed as a braided tensor category.  Then define the equivariantization of $\C$ by $G.$
\begin{definition}\cite[Definition 4.2.2]{DGNO}
Let $\C^G$ be the category with objects $G$-equivariant objects.  That is an object $X\in\C$ along with an isomorphism $\mu_g:F_g(X)\to X$ such that the following diagram commutes for all $g,h\in G.$
$$\xymatrix{ F_g(F_h(X)) \ar[rr]^{F_g(u_h)} \ar[d]_{\gamma_{g,h}(X)} && F_g(X) \ar[d]^{u_g}\\
F_{gh}(X) \ar[rr]^{u_{gh}} && X}$$
The morphisms in $\C^G$ are morphisms in $\C$ which commute with $u_g.$  The tensor product on $\C^G$ is the obvious one induced by the tensor product on $\C.$  Since the action of $G$ on $\C$ respects the braiding, there is an induced braiding on $\C^G.$
\end{definition}

The following propostition from \cite{DGNO} relates actions of $G$ on $\C$ and equivariantization.

\begin{proposition} \cite[Theorem 4.18(ii)]{DGNO} \label{DGNO 4.18}  Let $G$ be a finite group and $\C$ be a braided tensor category over $\Rep(G).$  Then there is a braided tensor category $\D$ equipped with an action of $G,$ such that $\D^G\cong\C.$
\end{proposition}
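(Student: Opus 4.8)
The plan is to construct $\D$ explicitly by a de-equivariantization. Let $A=\cO(G)$ be the algebra of $\BC$-valued functions on $G$, regarded as an object of $\E=\Rep(G)$ via the left translation action; it is a commutative separable (indeed connected \'etale) algebra, and translation realizes $G$ as a group of algebra automorphisms of $A$. Applying the given braided functor $\E\to\C'$, we obtain a commutative algebra $A\in\C$ whose underlying object lies in the M\"uger center $\C'$. Set $\D:=\C_A$, the category of left $A$-modules in $\C$.

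First I would check that $\D$ is a braided fusion category. Separability of $A$ gives semisimplicity of $\C_A$, while $\FP(A)=|G|<\infty$ yields finiteness and rigidity, so $\C_A$ is fusion; commutativity of $A$ makes it a tensor category under $\otimes_A$. The crucial point is that $A\in\C'$ is transparent, so the braiding $\sigma$ of $\C$ descends: for $A$-modules $M,N$ one checks that $\sigma_{M,N}$ intertwines the two $A$-module structures on $M\otimes N$ and hence factors through the coequalizer $M\otimes_A N$, producing $\sigma^A_{M,N}\colon M\otimes_A N\xrightarrow{\sim} N\otimes_A M$ satisfying the hexagon axioms. This step uses transparency of $A$ in an essential way — it is exactly where the hypothesis ``$\C$ over $\Rep(G)$'' (a braided functor $\Rep(G)\to\C'$, not merely $\Rep(G)\to\C$) is needed — and I expect it to be the main technical obstacle.

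Next comes the $G$-action. Each $g\in G$ acts on $A$ as an algebra automorphism, hence induces a tensor autoequivalence $F_g\colon\C_A\to\C_A$ sending $(M,\rho)$ to $(M,\rho\circ(g^{-1}\otimes\id_M))$. Since $F_g$ is the identity on underlying objects and the braiding of $\C_A$ was built from that of $\C$, each $F_g$ is braided, and $g\mapsto F_g$ is (strictly) monoidal, so $G$ acts on $\D$ viewed as a braided tensor category.

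Finally I would exhibit a braided tensor equivalence $\C\xrightarrow{\sim}\D^G$. Send $X\in\C$ to the free $A$-module $A\otimes X$ equipped with the $G$-equivariant structure $u_g\colon F_g(A\otimes X)=A\otimes X\to A\otimes X$ coming from the translation action of $g^{-1}$ on $A$; functoriality and the hexagon make $X\mapsto(A\otimes X,\{u_g\})$ a braided tensor functor. For a quasi-inverse, use that every $\cO(G)$-module $M$ in $\C$ is canonically $G$-graded, $M\simeq\bigoplus_{g\in G}M_g$, and that a $G$-equivariant structure supplies compatible isomorphisms permuting the homogeneous components transitively; then $M\mapsto M_e$ defines a functor $\D^G\to\C$, and one verifies the two composites are naturally isomorphic to the identities. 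The remaining work is bookkeeping with the equivariance isomorphisms $u_g$ and the coequalizer description of $\otimes_A$, requiring no new ideas beyond the transparency input above. $\hfill\square$
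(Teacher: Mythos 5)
The paper offers no proof of this proposition --- it is quoted directly from \cite[Theorem 4.18(ii)]{DGNO} --- and your argument is precisely the de-equivariantization construction by which that reference establishes it: take the regular algebra $\mathrm{Fun}(G)\in\Rep(G)$, push it into $\C'$ to obtain a transparent commutative \'etale algebra $A$, set $\D=\C_A$ with the descended braiding and the translation $G$-action, and check that the free-module functor gives $\C\simeq\D^G$. The one point worth making explicit is that for $\D$ to be a fusion category (simple unit $A$, rigidity of $\otimes_A$) you need $\Hom_\C(\be,A)=\BC$, i.e.\ the braided functor $\Rep(G)\to\C'$ must be fully faithful; this is built into DGNO's notion of a category ``over $\Rep(G)$'' and holds in every application in this paper, where the functor is the inclusion of a Tannakian subcategory of the M\"uger center.
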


\subsection{Tannakian centers of braided near-group categories.}

Let $\C$ be a braided near-group category with near-group fusion rule $(G,k).$  Assume that $\C$ is not symmetric, so $\C'=\Ve_G.$  Therefore $\C'=\Rep(A,z)$ for some choice of finite group $A$ and $z\in A.$  For the remainder of this section, we will assume that $z\neq1$ and derive a contradiction.

Recall $H:=\Rep(A/\langle z\rangle)\subseteq \C'$ is a maximal Tannakian subcategory of $\Rep(A,z).$ By Proposition \ref{DGNO 4.18}, there exists a braided fusion category $\D$ and an action of $H$ on $\D$ so that $\D^H=\C$ and $\Ve^H=\Rep(H).$ Since $\C'$ is a braided tensor category over $\Rep(H),$ there also exists a category $\D_1\subset\D$ such that $\D_1^H=\C'.$  By \cite[Proposition 4.26]{DGNO}, we have $\FP(\D_1)=\FP(\C')/|H|=2.$  Let $\cO(\D_1)=\{1,Z\},$ and $\cO_1,\ldots,\cO_m$ denote the orbits of the simple objects of $\D$ under the action of $H,$ where $\cO_1=\{1\}$ and $\cO_2=\{Z\}.$  Since $\D_1^H=\C'$ and there is only one simple object of $\C$ not contained in $\C',$ there are only three orbits. The following proposition shows that no such category can exist.

\begin{proposition} There are no super-Tannakian categories $\T$ with the following structure:\\
(i) $\cO(\T)=\{1,\delta,T_1,\ldots, T_q\}$ where the fusion subcategory of $\T$ generated by $\delta$ is braided equivalent to $s\Ve,$\\
(ii) An action of a group $A$ on $\T$ transitively permuting $\{T_1,\ldots, T_q\}.$
\end{proposition}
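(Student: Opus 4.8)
The plan is to derive a contradiction by combining the structural restrictions on $\T$ coming from (i) with the transitivity hypothesis (ii), using Lemma \ref{twist} as the key tool. First I would observe that since $\T$ is super-Tannakian with $\cO(\T) = \{1, \delta, T_1, \ldots, T_q\}$, the object $\delta$ is invertible, lies in $\T' = \T$, and generates a copy of $s\Ve$; so Lemma \ref{twist} applies with $\C = \T$. In particular, for each $i$, the simple object $\delta \otimes T_i$ is again simple (since $\delta$ is invertible), hence equals some $T_{j}$ or $\delta$ or $1$; as $\delta \otimes T_i$ has the same Frobenius-Perron dimension as $T_i$, which is $>1$ (it cannot be $1$ since $\T$ is near-group-like in the relevant sense — more precisely the $T_i$ are the non-unit, non-$\delta$ simples, and if some $T_i$ were invertible the fusion rule analysis would force it to be, say, part of a group, contradicting transitivity of a single orbit unless $q=1$, which I handle separately), we get that $\delta \otimes T_i = T_{\tau(i)}$ for some permutation $\tau$ of $\{1, \ldots, q\}$.

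Next I would bring in the group action. Since $A$ acts on $\T$ and transitively permutes $\{T_1, \ldots, T_q\}$, for any $i, j$ there is $a \in A$ with $F_a(T_i) \cong T_j$. The crucial point is that the action of $A$ is by braided tensor autoequivalences, and $\delta$, being the unique nontrivial invertible object outside $\{1\}$ generating $s\Ve$ inside the Müger center, must be fixed by every $F_a$ (it is canonically determined: it is the unique object $Z$ with $Z \otimes Z \cong 1$ and $\sigma_{Z,Z} = -\id$). Therefore each $F_a$ commutes with $\delta \otimes (-)$, i.e. $F_a(\delta \otimes T_i) \cong \delta \otimes F_a(T_i)$. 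Now I claim the permutation $\tau$ must have a fixed point: either $\tau$ is trivial — in which case $\delta \otimes T_1 \cong T_1$, directly contradicting Lemma \ref{twist} applied to $V = T_1$ — or $\tau$ is a nontrivial involution pairing up the $T_i$'s. In the latter case, transitivity of the $A$-action lets me move any $T_i$ to any $T_j$; since $F_a$ conjugates the $\delta$-action, $\tau$ is $A$-equivariant, so $A$ permutes the $\tau$-orbits, but $A$ acts transitively on the $T_i$ while preserving the partition into $\tau$-pairs, which is impossible unless there is a single $\tau$-pair, i.e. $q = 2$ and $\delta \otimes T_1 \cong T_2$; but then $\delta \otimes T_1 \cong T_2 \cong F_a(T_1)$ for suitable $a$, and comparing $d_+$ as in the proof of Lemma \ref{twist} gives $d_+(T_2) = -d_+(T_1)$ while the autoequivalence $F_a$ preserves $d_+$, so $d_+(T_1) = d_+(T_2) = -d_+(T_1)$, forcing $d_+(T_1) = 0$, a contradiction.

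I should also dispose of the degenerate possibility $q = 1$: then $\cO(\T) = \{1, \delta, T_1\}$ with $T_1$ the unique simple of dimension $>1$, so $\T$ is itself a (symmetric) near-group category with a single noninvertible object and invertible group $\{1, \delta\} \cong \BZ/2\BZ$; the fusion rule forces $\delta \otimes T_1 \cong T_1$, again contradicting Lemma \ref{twist}. Thus in every case we reach a contradiction and no such $\T$ exists.

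The main obstacle I anticipate is pinning down precisely why $\delta$ is fixed by the $A$-action and why the permutation $\tau$ induced by $\delta \otimes (-)$ is $A$-equivariant — this is intuitively clear from the canonical characterization of $\delta$ (unique order-two invertible with $\sigma_{\delta,\delta} = -\id$, lying in the Müger center), but making it rigorous requires knowing that braided tensor autoequivalences preserve the Müger center, preserve Frobenius–Perron dimensions, and preserve the self-braiding $\sigma_{\delta,\delta}$. Once that is in hand, the rest is the $d_+$ computation already carried out in Lemma \ref{twist} combined with an elementary orbit-counting argument, so the proof is short.
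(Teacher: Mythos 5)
Your core argument is the paper's: since $\delta$ is invertible, $\delta\otimes T_1\simeq T_s$ for some $s$; transitivity of $A$ supplies a braided tensor autoequivalence carrying $T_1$ to $T_s\simeq\delta\otimes T_1$; and the sign computation $d_+(\delta\otimes T_1)=-d_+(T_1)$ underlying Lemma \ref{twist} then forces $d_+(T_1)=0$, a contradiction. The paper's proof is exactly this three-line argument and nothing more. Your extra scaffolding --- showing $\delta$ is fixed by each $F_a$, that the permutation $\tau$ induced by $\delta\otimes(-)$ is $A$-equivariant, the case split on $\tau$, and the reduction to $q=2$ --- is unnecessary, because the $d_+$ comparison you carry out in the $q=2$ case works verbatim for any $q$: transitivity forces all the $T_i$ to share a common value of $d_+$, while tensoring with $\delta$ flips its sign.

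One step in that detour is in fact false: you assert that a group acting transitively on $\{T_1,\ldots,T_q\}$ while preserving the partition into $\tau$-pairs must have a single pair, i.e.\ $q=2$. Transitive actions preserving a nontrivial block system certainly exist (for instance $\BZ/4\BZ$ acting on itself by translation preserves the two cosets of its order-two subgroup), so ``$A$ permutes the $\tau$-orbits'' does not imply $q=2$. Because the reduction is superfluous, this error does not sink the proof: delete the block-counting argument and apply your $d_+$ comparison directly to $T_1$ and $T_{\tau(1)}$, and you recover the paper's proof. But as written, the argument passes through an unjustified (indeed incorrect) claim, so it needs that repair before it is complete.
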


\begin{proof}
Since $\delta$ is invertible, $\delta\otimes T_1\simeq T_s$ for some $1\leq s\leq q.$  By Lemma \ref{twist} there is no automorphism mapping $\delta\otimes T_1$ to $T_s.$  This contradicts the assumption that $A$ acts transitively on $\{T_1,\ldots,T_q\}.$
\end{proof}

Therefore we proved the following proposition.
\begin{proposition} \label{tanak} If $\C$ is a non-symmetric, braided, near-group category and $k\neq0,$ then the M\"uger center $\C'=\Rep(H)$ for some abelian group $H.$
\end{proposition}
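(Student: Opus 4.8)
The plan is to assemble the pieces already in place: I want to show that if $\C$ is non-symmetric, braided near-group with $k\neq 0$, then its M\"uger center $\C'$ is Tannakian (equivalently, the central element $z$ in the presentation $\C'\cong\Rep(A,z)$ is trivial) and the underlying group is abelian. First I would recall from Proposition~\ref{center} that under the hypothesis $k\neq 0$ and non-symmetry we have $\C'=\Ve_G$, so $\C'$ is a \emph{pointed} symmetric fusion category; in particular all its simple objects are invertible. By Deligne's classification (\cite[Corollaire 0.8]{D}) $\C'\cong\Rep(A,z)$ for a finite group $A$ and $z\in A$ with $z^2=1$. Since $\Rep(A,z)$ is pointed, every irreducible representation of $A$ is one-dimensional, which forces $A$ to be abelian, and the group of invertible objects is the Pontryagin dual $\widehat A\cong G$. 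So the "abelian" part of the conclusion is essentially automatic once we know $\C'$ is pointed; the real content is that $z=1$, i.e. $\C'$ is Tannakian rather than properly super-Tannakian.

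To rule out $z\neq 1$ I would argue exactly as in the preceding subsection, assuming $z\neq 1$ toward a contradiction. The element $z\neq 1$ gives a distinguished invertible object $\delta\in\C'$ generating a copy of $s\Ve$ inside $\C$ (this is the content of the presentation of $\Rep(A,z)$ recalled in Section~3.2, taking $\delta$ to correspond to a character of $A$ not killing $z$). Let $H:=\Rep(A/\langle z\rangle)$ be the maximal Tannakian subcategory. Since $\C$ is a braided tensor category over $\Rep(H)$, Proposition~\ref{DGNO 4.18} produces a braided fusion category $\D$ with an $H$-action and $\D^H\cong\C$, and a subcategory $\D_1\subseteq\D$ with $\D_1^H\cong\C'$; by \cite[Proposition 4.26]{DGNO} one computes $\FP(\D_1)=\FP(\C')/|H|=2$, so $\cO(\D_1)=\{1,Z\}$. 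Counting orbits of $H$ on $\cO(\D)$: the objects of $\C'$ account for the orbits inside $\D_1$ (namely $\{1\}$ and $\{Z\}$), and the single non-invertible simple object $X$ of $\C$, being the only simple not in $\C'$, must correspond to exactly one further orbit $\cO_3=\{T_1,\dots,T_q\}$ with $H$ acting transitively. Thus $\D$ (equipped with the $H$-action, $H$ playing the role of $A$ in the Proposition) is a super-Tannakian category of exactly the forbidden shape $\cO(\D)=\{1,Z,T_1,\dots,T_q\}$.

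Here I would invoke the "no such category exists" Proposition just proved: $Z$ plays the role of $\delta$ — it is the invertible object generating the $s\Ve$ coming from $z$ (one should check that $\delta$, being the generator of the super-part of $\C'$, pulls back under equivariantization to the nontrivial object $Z$ of $\D_1$, which lives in $\D'$ and generates $s\Ve$ there) — and $H$ acts transitively on $\{T_1,\dots,T_q\}$. Since $Z$ is invertible, $Z\otimes T_1\simeq T_s$ for some $s$; but Lemma~\ref{twist} says no tensor automorphism can send $T_1$ to $Z\otimes T_1$, contradicting transitivity of the $H$-action. This contradiction shows $z=1$, so $\C'=\Rep(H)$ is Tannakian, and since $\C'$ is pointed, $H=A$ is abelian. $\hfill\square$

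The step I expect to be the main obstacle is the bookkeeping that identifies the generator of $s\Ve$ correctly through the equivariantization: one must verify that the super-Tannakian structure on $\C'$ (the presence of $z\neq 1$) forces the nontrivial object $Z$ of $\D_1$ to be precisely an object generating an $s\Ve$ inside $\D$ that is central in $\D$ — i.e. that $Z\in\D'$ and $\sigma_{Z,Z}=-\id$ — so that Lemma~\ref{twist} applies with $\delta=Z$. Everything else (the orbit count, the application of Proposition~\ref{DGNO 4.18} and the $\FP$-dimension computation, the deduction that pointed symmetric implies abelian underlying group) is routine given the results already assembled in the excerpt.
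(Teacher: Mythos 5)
Your proposal follows the paper's own argument essentially verbatim: use Proposition~\ref{center} to get $\C'=\Ve_G$, write $\C'=\Rep(A,z)$ via Deligne, assume $z\neq1$, de-equivariantize by the maximal Tannakian subcategory $\Rep(A/\langle z\rangle)$ via Proposition~\ref{DGNO 4.18}, count the three $H$-orbits on $\cO(\D)$, and derive a contradiction from Lemma~\ref{twist} applied to the generator $Z$ of the residual $s\Ve$. The bookkeeping point you flag (that $Z$ lands in $\D'$ and generates $s\Ve$ there) is indeed the one step the paper also glosses over, so your account is faithful and correct.
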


\section{Classification of non-symmetric braided near-group categories.}
The goal of this section is to show there are $7$ non-symmetric, braided, near-group categories (up to braided tensor equivalence) which are not Tambara-Yamagami.  Again, we only care about the case when $k\neq0,$ as J. Siehler already did the classification when $k=0$ \cite{S1}. \\
In the previous section, we proved $\C'=\Rep(H).$  By Theorem \ref{DGNO 4.18}, there exists a braided fusion category $\D$ and an action of $H$ on $\D$ so that $\D^H=\C$ and $\Ve^H=G.$  Let $\cO_1,\ldots,\cO_m$ denote the orbits of the simple objects of $\D$ under the action of $H,$ where $\cO_1=\{1\}.$  Since $\Ve^H=G,$ we have $m=2,$ otherwise $G\cup\{X\}\subsetneq\cO(\C).$  For the remainder of this section, let $\cO_2=\{D_1,\ldots, D_s\}.$

\begin{lemma}  \label{dims1} If $s>1,$ then $\dim(D_j)=1$ for $1\leq j\leq s.$
\end{lemma}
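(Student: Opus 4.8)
The plan is to leverage the fact that $\C = \D^H$ with $H$ acting by braided autoequivalences, so that $\FP(\D) = \FP(\C)$ and the orbit $\cO_2 = \{D_1,\dots,D_s\}$ accounts, after equivariantization, for the single non-invertible simple object $X$ of $\C$. The key numerical input is that forgetting the $H$-structure is a tensor functor $\F : \C \to \D$ sending $X$ to $\bigoplus_{j} D_j^{\oplus c}$ for a common multiplicity $c$ (common because $H$ permutes the $D_j$ transitively), and it sends $\cO(\C) = G$ onto the first orbit $\cO_1 = \{1\}$, so $\F(g) = 1^{\oplus ?}$; more precisely each invertible object of $\C$ maps to $1^{\oplus(\text{something})}$, and the total dimension count gives $\FP(\C) = |G| + \FP(X)^2 = \FP(\D)$. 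The objects $D_1,\dots,D_s$ all have the same Frobenius–Perron dimension, call it $\delta$, since they lie in one $H$-orbit.

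First I would extract the fusion constraints on $\D$. Since $X\otimes X \simeq \bigoplus_{g\in G} g \oplus kX$ in $\C$ and $\F$ is a tensor functor respecting $\FP$-dimension, applying $\F$ and using $\F(X) = \bigoplus_j D_j^{\oplus c}$ shows that $(\sum_j D_j)^{\otimes 2}$ decomposes into copies of $1$ and of the $D_j$'s only — i.e. the fusion subcategory of $\D$ generated by $\cO_2$ has simple objects exactly $\{1, D_1,\dots,D_s\}$, hence $\FP(\D) = 1 + s\delta^2$. Comparing with $\FP(\C) = |G| + \FP(X)^2$ and with $\FP(X) = c\cdot s\cdot\delta / (\text{index})$... — rather, the cleanest route is: $\FP(\D) = \FP(\C) = |G|(1) + \FP(X)^2$ on one side, and $\FP(\D) = 1\cdot 1 + s\delta^2$ on the other since $\cO_1$ contributes $\FP(1)^2 = 1$ and $\cO_2$ contributes $\sum_j \FP(D_j)^2 = s\delta^2$ (as these are all the simples of $\D$). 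Next, because $\D_1$ or the trivial category $\Ve$ equivariantizes to $\C' = \Rep(H) = \Ve^H$, we get $|H| = |G|$, and the orbit-stabilizer relation gives $s \mid |H| = |G|$, with $\delta$ equal to $\FP$ of the underlying object, constrained to satisfy $\delta^2 \cdot s = |G| - 1 + \FP(X)^2$; I'd then push $\delta \ge 1$ together with $s > 1$ to force $\delta = 1$.

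The main obstacle I anticipate is ruling out the case $s > 1$ with $\delta > 1$: a priori one could imagine $\D$ having several non-invertible simples of equal dimension permuted by $H$, whose equivariantization collapses to a single $X$. To close this I would argue that if some $D_j$ has $\FP(D_j) = \delta > 1$, then since $H$ acts transitively on $\cO_2$ the stabilizer $H_j$ of $D_j$ has index $s$, and the equivariantization of the $H$-stable object $\bigoplus_j D_j$ produces in $\C$ a single simple $X$ of $\FP$-dimension $|H_j|\cdot\delta = (|G|/s)\delta$; plugging into $\FP(X)^2 + |G| = 1 + s\delta^2$ gives $(|G|/s)^2\delta^2 + |G| = 1 + s\delta^2$, i.e. $\delta^2\bigl(|G|^2/s^2 - s\bigr) = 1 - |G| \le 0$, which forces $|G|^2 \le s^3 \le |G|^3$; combined with $s \le |G|$ and the Siehler bound $|G| \le k+1$ from \cite{S2} this squeezes the parameters so tightly that $s > 1$ already forces $\delta = 1$, a contradiction with $\delta > 1$. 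Thus $\delta = 1$, i.e. $\dim(D_j) = 1$ for all $j$ (dimension here meaning $\FP$-dimension, equivalently the categorical dimension after the sphericalization of Theorem \ref{sph}).
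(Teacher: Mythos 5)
There is a genuine gap, and it sits at the very foundation of your computation. You repeatedly assert $\FP(\D)=\FP(\C)$, but equivariantization multiplies Frobenius--Perron dimension: by \cite[Proposition 4.26]{DGNO} (which the paper itself invokes for $\D_1$), $\FP(\C)=\FP(\D^H)=|H|\cdot\FP(\D)$. With $|H|=|G|$ the correct identity is $|G|+\FP(X)^2=|G|\bigl(1+s\delta^2\bigr)$, not $|G|+\FP(X)^2=1+s\delta^2$, so the equation you feed into the final ``squeeze'' is wrong and everything downstream of it collapses. Two further problems compound this: the formula $\FP(X)=(|G|/s)\delta$ is unjustified (the simple objects of $\D^H$ lying over the orbit $\cO_2$ have the form $c\bigoplus_j D_j$ under the forgetful functor, where $c$ is the dimension of an irreducible projective representation of the stabilizer, so $\FP(X)=cs\delta$); and the concluding step --- ``this squeezes the parameters so tightly that $s>1$ already forces $\delta=1$'' --- is not an argument but a hope; the inequality $|G|^2\le s^3\le|G|^3$ you derive does not by itself exclude anything. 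You also never establish that $\delta$ is an algebraic integer in $\BZ$, which is the one fact that actually makes a divisibility argument bite.

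The paper's proof is far more economical and purely internal to $\D$: since $s\ge 2$, some $D_i$ satisfies $D_i^\ast\not\simeq D_1$, so $\Hom(1,D_1\otimes D_i)=0$ and, because $\D$ has only the two orbits $\{1\}$ and $\{D_1,\dots,D_s\}$ as simples, $D_1\otimes D_i=\bigoplus_j a_jD_j$. All $D_j$ share the same dimension $d$ (one $H$-orbit under tensor autoequivalences), so $d^2=d\sum_j a_j$ forces $d=\sum_j a_j\in\BZ$; then $d$ divides $\FP(\D)=1+sd^2$ by the proof of \cite[Proposition 8.22]{ENO}, hence $d\mid 1$ and $d=1$. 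I would encourage you to look for this kind of local fusion-ring argument before reaching for global dimension counts, and in any case to verify the normalization of $\FP$ under equivariantization before building a proof on it.
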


\begin{proof} If $s\geq 2,$ then there exists $1\leq i\leq s,$ such that $D_i^\ast\not\simeq D_1.$  Therefore $D_1\otimes D_i=\bigoplus_{j=1}^s a_j D_j,$ and $d=\dim(D_j)$ satisfies the identity $d^2=d\sum_{j=1}^s a_j.$  This gives $d\in\BZ,$ so by the proof of \cite[Proposition 8.22]{ENO}, $d$ divides $\FP(D).$  Therefore, since $\FP(D)=1+sd^2,$ we have $d=1.$ 
\end{proof}

\begin{lemma} \label{orb1} If $\cO_2=\{D_1,\ldots, D_s\},$ then $s$ is either $1$ or $n.$
\end{lemma}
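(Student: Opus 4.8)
The plan is to show that if $s>1$ then necessarily $s=n$; together with the obvious possibility $s=1$ (where $\cO_2$ is a single $H$-fixed point and $D_1$ may be non-invertible) this gives the lemma. First I would record the bookkeeping: since $\C'=\Rep(H)$ is pointed of rank $|G|=n$ (Propositions \ref{center} and \ref{tanak}), $|H|=n$, and $s=|\cO_2|=[H:\mathrm{Stab}(D_1)]$ divides $n$; write $n=sm$.

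So suppose $s>1$. By Lemma \ref{dims1} every $D_j$ is invertible, and since $\cO_1=\{1\}$ and $\cO_2=\{D_1,\dots,D_s\}$ exhaust the $H$-orbits of simple objects of $\D$, these are all the simple objects of $\D$; hence $\D$ is pointed with $\FP(\D)=s+1$. (The $H$-action then realizes a group of automorphisms of $\cO(\D)$ acting transitively on the non-identity elements, whence $\cO(\D)\cong(\BZ/p)^d$ --- but I would not need this refinement.) Since equivariantization multiplies Frobenius-Perron dimension by the order of the group --- the identity already used above via \cite[Proposition 4.26]{DGNO} --- and $\D^H=\C$, we get $\FP(\C)=n(s+1)$. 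On the other hand $\FP(\C)=\sum_{g\in G}1+\FP(X)^2=n+\FP(X)^2$, so $\FP(X)^2=ns$. The fusion rule $X\otimes X\simeq\bigoplus_{g\in G}g\oplus kX$ gives $\FP(X)^2=n+k\,\FP(X)$, so using $k\neq0$ we get $\FP(X)=n(s-1)/k$ and hence $sk^2=n(s-1)^2$. Substituting $n=sm$ gives $k^2=m(s-1)^2$, so $m$ is a perfect square, say $m=w^2$, and $k=(s-1)w$ with $w\ge1$.

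Finally I would feed this into Siehler's bound $n\le k+1$ (\cite[Theorem 1.1]{S2}, valid since $k\neq0$): it becomes $sw^2\le(s-1)w+1$, i.e. $(w-1)(sw+1)\le0$. As $s\ge2$ we have $sw+1>0$, which forces $w=1$; then $m=1$ and $s=n$, as claimed. I do not expect a genuine obstacle here: the two steps that need a word of justification rather than a routine computation are the dimension identity $\FP(\C)=n\,\FP(\D)$ and the equality $|H|=n$, both immediate from material already established in this section, after which the argument collapses to the elementary inequality $(w-1)(sw+1)\le0$.
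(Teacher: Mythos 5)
Your proof is correct, and it reaches the conclusion by a somewhat different route than the paper. Both arguments begin identically: for $s>1$, Lemma \ref{dims1} makes $\D$ pointed, so $\FP(\D)=s+1$ and $\FP(\C)=n(s+1)\in\BZ$. At that point the paper simply invokes Proposition \ref{pseudo}: integrality of $\FP(\C)$ together with $k\neq0$ forces $\FP(\C)=n(n+1)$, whence $s+1=n+1$. You instead bypass Proposition \ref{pseudo} entirely, extracting $\FP(X)^2=ns$ and $\FP(X)=n(s-1)/k$ from the fusion rule, then combining the resulting Diophantine relation $sk^2=n(s-1)^2$ with the orbit-stabilizer divisibility $s\mid n$ (an input the paper never needs) and Siehler's bound $n\le k+1$ to squeeze out $w=1$ and $s=n$. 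The underlying arithmetic engine is the same --- Proposition \ref{pseudo} is itself proved from Siehler's bound --- so your argument essentially inlines and reproves that proposition in the special case at hand; the paper's version is shorter because the integrality argument has already been packaged, while yours is more self-contained and incidentally records the extra structural facts $|H|=n$ and $s\mid n$. All the individual steps check out (in particular the factorization $sw^2-(s-1)w-1=(w-1)(sw+1)$ and the deduction that $m=k^2/(s-1)^2$ is a perfect square), so there is no gap.
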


\begin{proof} By Lemma \ref{dims1}, if $s\geq2,$ then $\FP(\D)=1+s.$  Therefore $\FP(\C)=n(1+s)\in \BZ$ and $1+s=\FP(\D)=\frac{1}{n}\FP(\C)=n+1,$ since $\FP(\C)=n(n+1)$ by Proposition \ref{pseudo}.
\end{proof}

\begin{proposition} \label{yanglee}
If $\cO(\D)=\{1,D_1\},$ then either $\C=\D^H$ is a Yang-Lee category or $\C$ is Tambara-Yamagami.  Moreover there are (up to braided equivalence) four braided near-group categories $\C$ which are not Tambara-Yamagami and $\C_H$ is of rank two.
\end{proposition}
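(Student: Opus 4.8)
The plan is to analyze the rank-two braided fusion category $\D$ with $\cO(\D) = \{1, D_1\}$ directly, then recover $\C = \D^H$ via equivariantization. First I would determine the fusion rule of $\D$: since $D_1$ is the unique non-invertible-or-invertible simple besides the unit, $D_1^\ast \simeq D_1$, and $D_1 \otimes D_1 \simeq 1 \oplus a D_1$ for some $a \in \BZ_{\ge 0}$. If $a = 0$ this is a pointed category (so $\D = \Ve$ or $s\Ve$), giving $\FP(\D) = 2$ and forcing $\C$ to be Tambara-Yamagami via Proposition \ref{pseudo}. If $a \ge 1$, then by the Frobenius-Perron argument $\FP(D_1)$ satisfies $d^2 = 1 + ad$, and one checks (e.g. via the classification of rank-two fusion categories, or directly) that $a = 1$, so $\D$ has the Yang-Lee fusion rule $(1,1)$; here $\FP(D_1)$ is the golden ratio. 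So the first key step is: \emph{$\D$ is either pointed of rank $\le 2$, or has Yang-Lee fusion rules.}

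Next I would pin down the possibilities for $\D$ as a \emph{braided} category. In the pointed case there is nothing new. In the Yang-Lee case, the braided fusion categories with Yang-Lee fusion rules are known — they are the four categories referenced in Example (ii), arising as (subcategories related to) the modular categories from $\mathfrak{sl}_2$ level $3$, distinguished by the choice of $S$- and $T$-matrix data; each of the two fusion categories carries two braidings. Since $\D$ is already braided near-group with trivial group part, $\D$ itself is one of these four, and each is modular (its Müger center is trivial). The second key step is therefore: \emph{identify $\D$ with one of the four known braided Yang-Lee categories.}

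Now I would run the equivariantization $\C = \D^H$, where $H$ acts on $\D$ and $\Ve^H = \Rep(H) = \C' = \Ve_G$ forces $|H| = n = |G|$. Since $\D$ has only two simple objects and the action must fix $1$ and permute the simples, $H$ necessarily acts \emph{trivially} on $\D$ (there is no nontrivial permutation of a single non-unit object, and a braided autoequivalence of a Yang-Lee category fixing the simples is trivial up to isomorphism). Hence $\C = \D^H \simeq \D \boxtimes \Rep(H)$ as braided categories; but for $\C$ to be a near-group category with only one non-invertible simple, $\Rep(H)$ must be pointed, i.e. $H$ abelian, and moreover the only non-invertible simple of $\C$ is $D_1 \boxtimes \be$, whose tensor square decomposes as $(1 \oplus D_1) \boxtimes \Rep(H)$ — this contains $|H|$ invertibles only if $H$ is trivial. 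So in fact $H = 1$, $G$ is trivial, and $\C = \D$ is one of the four braided Yang-Lee categories, all with fusion rule $(1,1)$. The third key step: \emph{$H$ acts trivially and must be trivial, so $\C = \D$.}

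Finally I would assemble: if $\D$ is pointed then $\C$ is Tambara-Yamagami; otherwise $\C$ is one of exactly four non-symmetric braided Yang-Lee categories, each with fusion rule $(1,1)$, matching the count in Theorem \ref{class}. The main obstacle I anticipate is the precise enumeration and non-equivalence of the four braided structures on the Yang-Lee fusion rules — this requires either citing the classification of braided categories with these fusion rules (via $\mathfrak{sl}_2$ level $3$ data, \cite{O1}) or computing the possible braidings and twists explicitly and checking which are symmetric (none, since the Müger center is trivial) and which are pairwise braided-inequivalent; the rest of the argument (fusion rules of $\D$, triviality of the $H$-action, triviality of $H$) is comparatively routine once the rank-two input is in hand.
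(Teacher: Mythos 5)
Your argument follows the paper's proof essentially step for step: split on whether the rank-two category $\D$ is pointed or has Yang--Lee fusion rules, show the pointed case yields a Tambara--Yamagami category, observe that in the Yang--Lee case the action of $H$ is trivial so $\C\simeq\D\boxtimes\Rep(H)$ forces $H=1$, and cite \cite{O1} for the count of four braided Yang--Lee categories. The only minor difference is that the paper handles the pointed case directly (the non-invertible object $X$ maps to $mD$, so $X\otimes X=m^2\cdot 1$ lies in $\Rep(H)$ and hence $k=0$) rather than routing through Proposition \ref{pseudo}.
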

\begin{proof}
Let $D=D_1.$ Assume $D\otimes D=1.$ Let $X$ be the non-invertible object of $\C.$  Therefore $X$ is an equivariant object under the action of $H$ on $\C.$  Therefore $X=mD$ for some integer $m.$  Therefore $X\otimes X=m^21$ in $\C$ and must therefore lie in $\Rep(H)$ in $\C.$  In this case $\C$ is Tambara-Yamagami.\\

Now assume $D\otimes D=1\oplus D.$  Therefore $H$ acts on $\D$ trivially.  This gives $\C=\D\boxtimes\Rep(H),$ which is only a near-group category when $H$ is trivial and $\C=\D.$

The last part of the proposition is simply a note that there are are four Yang-Lee categories up to braided equivalence \cite{O1}.
\end{proof}

Since we just classified the case when $\D$ is of rank two, we will assume for the remainder of this section that $s>1,$ and therefore by Lemma \ref{dims1}, $\D$ is a pointed braided category which is non-degenerate by \cite[Corollary 4.30]{DGNO} since $\D^H=\C,$ where $\C'=\Rep(H).$  It is shown (see \cite{DGNO} or \cite{JS}) that a non-degenerate pointed braided category is classified by an abelian group $A$ and a non-degenerate quadratic form $q:A\to\BC^\times$ on $A.$  Note that $A$ is the group of isomorphism classes of simple objects. They denote such a category by $\C(A,q).$  Recall that the data $(A,q)$ for a finite abelian group $A$ and a non-degenerate quadratic form $q:A\to \BC^\times$ is called a metric group.

\begin{proposition} If $\D$ is of rank at least three, then $A$ is either $\BZ/2\BZ\oplus\BZ/2\BZ$ or $\BZ/3\BZ.$  Moreover if:\\
(i) $A=\BZ/2\BZ\oplus\BZ/2\BZ,$ then $q(a)=-1$ for every non-trivial element of $A,$\\
(ii) $A=\BZ/3\BZ,$ then $q(a)=q(b)$ are primitive third roots of unity for both non-trivial elements $a,b$ of $A.$
\end{proposition}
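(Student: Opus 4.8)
The plan is to combine the classification of $\D$ as a metric group $\C(A,q)$ with the fact, established in the setup, that $H$ permutes the non-trivial simple objects of $\D$ transitively. First I would record the numerology: $|A|$ equals the rank of $\D$, which is $1+s$, and $s=|G|$ by Lemma \ref{orb1}; since we are in the case $s>1$ this gives $|A|=|G|+1\ge 3$, and, there being only the two orbits $\cO_1=\{1\}$ and $\cO_2$, the set $\cO_2=A\setminus\{1\}$ is a single $H$-orbit.

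The key structural point is that the $H$-action is by \emph{braided} tensor autoequivalences of $\C(A,q)$, and by the classification quoted above (\cite{DGNO}, \cite{JS}) such an autoequivalence is precisely an automorphism of $A$ preserving $q$. Hence the image of $H$ in $\Aut(A)$ is a subgroup which preserves $q$ and acts transitively on $A\setminus\{1\}$. Automorphisms preserve orders of elements, so every non-trivial element of $A$ has one common order, which in a finite abelian group forces that order to be a prime $p$; therefore $A$ is elementary abelian of order $p^d$. Transitivity together with invariance of $q$ makes $q$ constant on $A\setminus\{1\}$, say $q(a)=\zeta$ for all $a\neq 1$, with $\zeta\neq1$ by non-degeneracy.

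The remaining work is to decide which elementary abelian $(A,q)$ with $q\equiv\zeta\neq1$ off the identity can arise, using $q(a^n)=q(a)^{n^2}$, $q(1)=1$, and non-degeneracy of $b(a,a')=q(aa')q(a)^{-1}q(a')^{-1}$. For $p$ odd: comparing $q(a^2)=q(a)^4$ with $q(a)$ forces $\zeta^3=1$, and for $p\ge5$ comparing $q(a^3)=q(a)^9$ with $q(a)$ additionally forces $\zeta^8=1$, hence $\zeta=1$, a contradiction; so $p=3$ and $\zeta$ is a primitive cube root of unity, after which bimultiplicativity of $b$ evaluated on two independent elements forces $d=1$ --- this is case (ii). For $p=2$: here $q(a)^4=q(a^2)=1$, so $\zeta^2=b(a,a)\in\{\pm1\}$; if $\zeta=\pm i$ then $b(a,a)=-1$ for all $a\neq1$, contradicting bimultiplicativity of $b$ once $d\ge2$, so $d=1$, which is excluded by $|A|\ge3$; if $\zeta=-1$ then $b(a,a')=-1$ for all distinct non-trivial $a,a'$, which contradicts bimultiplicativity once $d\ge3$ and degenerates $b$ when $d=1$, leaving $d=2$ --- this is case (i).

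I expect the main obstacle to be the second paragraph, namely pinning down that a braided tensor autoequivalence of $\C(A,q)$ acts on objects via a $q$-preserving automorphism of $A$, since this is what gives the transitivity of the $H$-action its force; after that the argument is routine. I would be especially careful in the $p=2$ analysis, where a quadratic form genuinely carries more information than its associated bilinear form, and would keep track of the role of the hypothesis that $\D$ has rank at least three, which is exactly what rules out the degenerate low-rank solutions.
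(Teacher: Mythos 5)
Your proof is correct and reaches the same conclusions, but it replaces the paper's central tool with a more elementary one. Both arguments share the same opening reduction: the $H$-action is by automorphisms of the metric group $(A,q)$, transitivity on $A\setminus\{1\}$ forces $A$ to be elementary abelian of exponent $p$ and forces $q$ to take a single value $\omega$ on all non-trivial elements. From there the paper invokes the Gauss sum identity $\bigl|\sum_{a\in A}q(a)\bigr|^2=|A|$ from \cite[Corollary 6.3]{DGNO}: writing $m=|A|$ this gives $m=|1+(m-1)\omega|^2\geq(m-2)^2$, hence $m\leq 4$ in one stroke, and the same identity then pins down $\omega=-1$ when $m=4$ and $\omega$ a primitive cube root when $m=3$. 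You instead use only the axioms of a quadratic form --- the identity $q(a^n)=q(a)^{n^2}$ together with bimultiplicativity and non-degeneracy of the associated bicharacter $b$ --- to rule out $p\geq 5$, to force $d=1$ when $p=3$, and to force $\zeta=-1$, $d=2$ when $p=2$. Your computations check out (in particular $b(x,y^2)=1$ versus $b(x,y)^2=\zeta^{-2}$ correctly kills $\zeta=\pm i$ in rank $\geq 2$ at $p=2$, and the three-independent-generators computation correctly kills $d\geq 3$). The trade-off: the paper's route is shorter and uniform but rests on the nontrivial Gauss sum theorem for metric groups; yours is self-contained modulo \cite[Remark 2.37]{DGNO} but requires a finer case analysis, especially at $p=2$ where, as you rightly flag, $q$ is not determined by $b$. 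The one step you single out as a worry --- that a braided autoequivalence of $\C(A,q)$ induces an automorphism of the metric group $(A,q)$ --- is exactly what the paper asserts and uses at the same point, so you are on the same footing there.
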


\begin{proof}
$\cO(\D)=\{1,D_1,\ldots,D_p\}$ where $H$ acts transitively on $\{D_1,\ldots, D_s\}$ by braided-tensor functors.  Let $A=\{e,d_1,\ldots,d_s\},$ then $o(d_i)=o(d_j)$ for $1\leq i,j\leq s.$  Therefore $A$ is an elementary abelian group.  Let $p=o(d_1).$  The action of $H$ on $\D$ gives rise to an action of $H$ on the metric group $(A,q)$ by morphisms $\{\varphi_h\}_{h\in H}$ of metric groups.  Since $H$ acts transitively on $\cO_2,$ we have for any $1\leq i,j\leq s$ we have $h\in H$ so that $\varphi_h(d_i)=d_j.$  Since $\varphi_h$ is a morphism of metric groups, we have $q(d_j)=q(\varphi_h(d_i))=q(d_i).$ Therefore it makes sense to define $\omega=q(d_i).$ By \cite[Remark 2.37 (i)]{DGNO}, we have $1=q(e)=q(d_1^p)=\omega^{p^2},$ therefore $\omega$ is a root of unity.

\cite[Corollary 6.3]{DGNO} states that for $(A,q)$ a metric group, we have
$$\Big|\sum_{a\in A}q(a)\Big|^2=|A|.$$
Therefore if $|A|=m,$ we have 
$$m=|1+(m-1)\omega|^2\geq(|(m-1)\omega|-1)^2=(m-2)^2=m^2-4m+4.$$
This gives $(m-1)(m-4)\leq0,$ so $m=2,3$ or $4$ and we assume $m\geq3.$  Since $A$ is an elementary abelian group, we know that $m=4$ implies $A=\BZ/2\BZ\oplus\BZ/2\BZ.$

Assume $A=\BZ/2\BZ\oplus\BZ/2\BZ.$  Then $|1+3\omega|^2=4,$ giving $\omega=-1.$

Finally, for $A=\BZ/3\BZ,$ we have $\omega=q(d_1)=q(d_1^2)=\omega^4,$ so $\omega$ is a third root of unity.  This gives $|1+2\omega|^2=3,$ and $\omega$ is a primitive third root of unity.
\end{proof}

\begin{proposition} \label{classify}
Let $\C$ be a non-symmetric braided near group category with fusion rule  $(G,k)$ where $k\neq0.$  There are two such categories (up to braided tensor equivalence) when $G=\BZ/2\BZ$ and one such category (up to braided tensor equivalence) when $G=\BZ/3\BZ.$
\end{proposition}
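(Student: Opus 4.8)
The plan is to combine the structural results already assembled: by Proposition~\ref{tanak} we know $\C'=\Rep(H)$ for an abelian group $H$, and by Proposition~\ref{DGNO 4.18} there is a non-degenerate (by \cite[Corollary 4.30]{DGNO}) pointed braided category $\D=\C(A,q)$ with an action of $H$ such that $\D^H=\C$. The previous proposition pins down $(A,q)$: either $A=\BZ/2\BZ\oplus\BZ/2\BZ$ with $q\equiv-1$ on nontrivial elements, or $A=\BZ/3\BZ$ with $q$ taking a primitive third root of unity on both nontrivial elements. So the remaining task is a bookkeeping argument: for each of these two metric groups, determine all abelian groups $H$ that can act on $\D$ with a single nontrivial orbit $\{D_1,\dots,D_s\}$ (with $s=n=|H|=|G|$ by Lemma~\ref{orb1}), and for each valid action count the resulting equivariantizations $\D^H$ up to braided equivalence.

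First I would analyze the case $A=\BZ/3\BZ$. Here $s=2$, so $H$ permutes the two nontrivial elements $d_1,d_2=d_1^{-1}$ transitively; since the only nontrivial automorphism of $\BZ/3\BZ$ is inversion (which does preserve $q$, as $q(d_1)=q(d_1^2)$), we need $H$ to act through this order-2 automorphism, forcing $|H|=2$, i.e.\ $G=\BZ/2\BZ$. Wait — that gives fusion rule $(\BZ/2\BZ,k)$, but Theorem~\ref{class} claims $A=\BZ/3\BZ$ produces the category with fusion rule $(\BZ/3\BZ,2)$; so I must be careful about which group is $G$ and which orbit data produces which $n$. Re-examining: $\FP(\D)=1+s d^2$ with $d=1$, and $\FP(\C)=n\cdot\FP(\D)=n(n+1)$ by Proposition~\ref{pseudo}, so $1+s=n+1$, i.e.\ $s=n=|H|=|G|$. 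Thus $A=\BZ/3\BZ$ has $|A|=3=1+s$ forcing $s=2$, hence $n=2$ — so this case yields $G=\BZ/2\BZ$, and $k=n-1=1$, giving fusion rule $(\BZ/2\BZ,1)$. Correspondingly $A=\BZ/2\BZ\oplus\BZ/2\BZ$ has $|A|=4=1+s$, so $s=3$, $n=3$, $k=2$, giving $(\BZ/3\BZ,2)$. (I should state this correspondence explicitly at the start of the proof to avoid confusion, and note that the proposition's phrasing "when $G=\BZ/2\BZ$" refers to $\C$, whose $G$ is $H$, while "$A$" in the previous proposition is the group of the auxiliary category $\D$.)

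With that settled, the counting proceeds case by case. For $A=\BZ/2\BZ\oplus\BZ/2\BZ$, $q\equiv-1$: the automorphism group $\Aut(A,q)$ preserving this $q$ is all of $GL_2(\BF_2)\cong S_3$, which acts transitively on the three nontrivial elements; I would take $H=\BZ/3\BZ$ acting via an order-3 cyclic subgroup (the two generators give conjugate, hence equivalent, actions), check the equivariant fusion constraints produce exactly the near-group fusion rule $(\BZ/3\BZ,2)$, and verify via the Davydov–Müger–Nikshych–Ostrik machinery or a direct cocycle computation that there is a unique such equivariantization up to braided tensor equivalence — this uniqueness is the one substantive point. For $A=\BZ/3\BZ$ with $q(d_1)=\zeta$ a primitive cube root: $H=\BZ/2\BZ$ acts by inversion; here the two choices $\zeta=\omega$ and $\zeta=\bar\omega$ give the two inequivalent metric groups (equivalently the two inequivalent braidings), and each admits essentially one compatible $\BZ/2\BZ$-action, yielding exactly two braided near-group categories with fusion rule $(\BZ/2\BZ,1)$.

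The main obstacle I expect is the uniqueness count within each case — namely showing that once the metric group $(A,q)$ and the abstract $H$-action on $A$ are fixed, the equivariantization $\D^H$ is determined up to braided equivalence (no genuine cohomological freedom in the lift of the $H$-action to a categorical action, and no freedom in the associativity/braiding data of the equivariantization). This requires either invoking a classification of $H$-actions on $\C(A,q)$ and their equivariantizations (e.g.\ via \cite[Section 4]{DGNO}) together with an $H^2(H,\widehat A)$-type obstruction/parametrization vanishing for $H=\BZ/2\BZ$ and $H=\BZ/3\BZ$ with these small $A$, or a hands-on verification; the rest of the argument is the arithmetic bookkeeping of $\FP$ dimensions and orbit sizes, which is routine given Propositions~\ref{pseudo} and the preceding proposition.
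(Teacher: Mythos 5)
Your proposal follows essentially the same route as the paper: identify $\C$ as $\C(\BZ/3\BZ,q)^{\BZ/2\BZ}$ (giving the two categories with fusion rule $(\BZ/2\BZ,1)$, one for each choice of primitive cube root) and as $\C(\BZ/2\BZ\oplus\BZ/2\BZ,q)^{\BZ/3\BZ}$ (giving the single category with fusion rule $(\BZ/3\BZ,2)$), and your careful untangling of which metric group $A$ corresponds to which $G$ is correct. The ``one substantive point'' you flag --- uniqueness of the categorical $H$-action and of the resulting equivariantization up to braided equivalence --- is exactly the step the paper itself disposes of in one sentence (``we have one non-trivial action of $H$ on $\C(A,q)$'') without the cohomological justification you sketch, so your treatment is, if anything, more explicit about where the real content lies.
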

\begin{proof}
Assume $G=\BZ/2\BZ.$  We have shown above that $\C=\C(\BZ/3\BZ,q)^H,$ where $\Ve_{\BZ/2\BZ}=\Rep(H)$ (therefore $H=\BZ/2\BZ$) and $q:\BZ/3\BZ\to \BC^\times$ is defined by $q(a)=q(b)$ is a primitive third root of unity for non-trivial elements $a,b\in\BZ/3\BZ.$  For each of the two choices of $q,$ we have one non-trivial action of $H$ on $\C(\BZ/3\BZ,q).$  Therefore there are two non-symmetric near-group categories with fusion rule $(\BZ/2\BZ,1).$

Assume $G=\BZ/3\BZ.$  Then we showed that $\C=\C(\BZ/2\BZ\oplus\BZ/2\BZ,q)^H,$ where $\Ve_{\BZ/3\BZ}=\Rep(H)$ (therefore $H=\BZ/3\BZ$) and $q:\BZ/2\BZ\oplus\BZ/2\BZ\to\BC^\times$ is defined by $q(a)=q(b)=-1$ for the generators of $\BZ/2\BZ\oplus\BZ/2\BZ.$  Again, we only have one non-trivial action of $\BZ/3\BZ$ on $\C(\BZ/2\BZ\oplus\BZ/2\BZ,q).$   Therefore we get one non-symmetric near-group category with fusion rule $(\BZ/3\BZ,2).$
\end{proof}

 \bibliographystyle{ams-alpha}

\end{document}